\newtheorem{theorem}{Theorem}[section]
\newtheorem{lemma}{Lemma}[section]
\newtheorem{proposition}{Proposition}[section]
\newtheorem{definition}{Definition}[section]
\newtheorem{remark}{Remark}[section]
\newcommand{\bal}{\begin{align}}
\newcommand{\bbal}{\begin{align*}}
\newcommand{\beq}{\begin{equation}}
\newcommand{\eeq}{\end{equation}}
\newcommand{\bca}{\begin{cases}}
\newcommand{\eca}{\end{cases}}
\newcommand{\pa}{\partial}
\newcommand{\fr}{\frac}
\newcommand{\De}{\Delta}
\newcommand{\ep}{\varepsilon}
\newcommand{\dd}{\mathrm{d}}
\newcommand{\R}{\mathbb{R}}
\newcommand{\Z}{\mathbb{Z}}
\newcommand{\les}{\lesssim}
\newcommand{\f}{\left}
\newcommand{\g}{\right}
\newcommand{\bi}{\Big}
\begin{document}
\bibliographystyle{plain}
\title{A remark on the zero-filter limit for the Camassa-Holm equation in $B^s_{2,\infty}(\R)$ }

\author{Guorong Qu$^{1}$, Jianzhong Lu$^{2,}$\footnote{E-mail: guorongqu@163.com; louisecan@163.com(Corresponding author); dwhh0406@163.com
}, Wei Deng$^{3}$\\
\small $^1$ School of Tourism Data, Guilin Tourism University, Guilin 541006, China\\
\small $^2$ School of Mathematics and Information, Xiangnan University, Chenzhou, 423000, China\\
\small $^3$ Department of Mathematics, Ganzhou Teachers College, Ganzhou 341000, China
}

\date{\today}
\maketitle\noindent{\hrulefill}

{\bf Abstract:}
This paper investigates the zero-filter limit problem associated with the Camassa-Holm equation. In the work cited as \cite{C.L.L.W.L}, it was established that, under the hypothesis of initial data $u_0\in B^s_{2,r}(\R)$ with $s>\frac32$ and $1\leq r<\infty$, the solutions $\mathbf{S}_{t}^{\mathbf{\alpha}}(u_0)$ of the Camassa-Holm equation exhibit convergence in the $L^\infty_T(B^s_{2,r})$ norm to the unique solution of the Burgers equation as $\alpha\rightarrow 0$. Contrary to this result, the present study demonstrates that for initial data $u_0\in B^s_{2,\infty}(\R)$ the solutions of the Camassa-Holm equation fail to converge strongly in the $L^\infty_T(B^s_{2,\infty})$ norm to the Burgers equation as $\alpha\rightarrow 0$.


{\bf Keywords:} Camassa-Holm equation; Burgers equation;  zero-filter limit.

{\bf MSC (2010):} 35Q35.

\vskip0mm\noindent{\hrulefill}

\section{Introduction}

In this work, we investigate the Camassa-Holm (CH) equation, which is expressed as
\begin{align}\label{alpha-c}
\begin{cases}
u_t+u \partial_xu=-\partial_x \left(1-\alpha^2 \partial_x^2\right)^{-1}\left(u^2+\frac{\alpha^2}{2} (\partial_xu)^2\right), \\
u(0,x)=u_0(x),
\end{cases}
\end{align}
and can be equivalently reformulated as
\begin{align}\label{alpha-cr}
\begin{cases}
u_t+3u \partial_xu=-\alpha^2\partial^3_x \left(1-\alpha^2 \partial_x^2\right)^{-1}(u^2)-\frac{\alpha^2}{2}\partial_x \left(1-\alpha^2 \partial_x^2\right)^{-1} (\partial_xu)^2,\\
u(0,x)=u_0(x).
\end{cases}
\end{align}

When the parameter $\alpha$ vanishes, the CH equation simplifies to the Burgers equation
\begin{align}\label{b}
\begin{cases}
u_t+3u \partial_xu=0,\\
u(0,x)=u_0(x),
\end{cases}
\end{align}
which can be viewed as the non-viscous counterpart ($\nu=0$) of the well-known Burgers equation:
$$\partial_tu+u \partial_xu-\nu \partial_{xx}u=0.$$
Proposed by Burgers in the 1940s as a simplified model for turbulence, this equation emulates the Navier-Stokes equation through its representation of nonlinear advection and diffusion, albeit in a one-dimensional context without a pressure gradient to drive the flow. For further details, refer to \cite{miao2009, Molinet}. The Burgers equation has been extensively studied since its inception and serves as a fundamental example of a PDE that evolves to produce shocks.

The CH equation was initially introduced in the study of hereditary symmetries as an integrable system \cite{Fokas}. It was later rediscovered by Camassa and Holm \cite{Camassa} as a model for shallow water wave dynamics, subsequently becoming widely recognized as the Camassa-Holm equation. The equation is completely integrable, possessing a bi-Hamiltonian structure \cite{Constantin-E,Fokas} and an infinite number of conservation laws \cite{Camassa,Fokas}. Among its most notable features are the peakon solutions, which take the form $ce^{-|x-ct|}$ with $c>0$, and have garnered significant interest in the field of physics \cite{Constantin-I,t}. Another striking characteristic of the CH equation is the phenomenon of wave breaking, where the solution remains bounded while its slope becomes unbounded within a finite time frame \cite{Constantin,Escher2,Escher3}. The existence of global weak solutions and dissipative solutions has been extensively studied \cite{bc1,bc2,xin}, with further results available in the cited references.


Recent literature has focused on the well-posedness of the Camassa-Holm equation in Sobolev and Besov spaces. Li and Olver \cite{wp19,wp23} demonstrated that the Cauchy problem (1.1) is locally well-posed for initial data $u_0\in H^{s}(\R)$ with $s>3/2$. Danchin \cite{wp13} examined local well-posedness in Besov spaces, establishing well-posedness in $B^s_{p,r}(\R)$ for $s>\max\{1+1/p,3/2\}$ and $1\leq p\leq\infty,~1\leq r<\infty$, with the continuous dependence aspect demonstrated by Li and Yin \cite{wp18}. For the critical case $s=3/2$, Danchin \cite{wp14} confirmed local well-posedness in $B^{3/2}_{2,1}(\R)$ and ill-posedness in $B^{3/2}_{2,\infty}(\R)$. In the critical space $B^{3/2}_{2,2}(\R)=H^{3/2}(\R)$, Guo et al. \cite{glmy} demonstrated norm inflation and consequent ill-posedness in $B^{1+1/p}_{p,r}(\R)$ for $1\leq p\leq\infty,~1< r\leq\infty$ using peakon solutions, thereby resolving the open problem posed by Danchin \cite{wp14}.

The investigation into the zero-filter limit of the Camassa-Holm equation was initiated by Gui and Liu in \cite{GL}, who demonstrated that the solutions of the dissipative Camassa-Holm equation converge, at least locally, to those of the dissipative Burgers equation as the filtering parameter $\alpha$ approaches zero within Sobolev spaces of lower regularity. More recently, Li et al. \cite{lyz} established that, as $\alpha\to0$, the solutions of the Camassa-Holm equation with fractional dissipation exhibit strong convergence in the space $L^\infty(0,T;H^s(\R))$ toward the inviscid Burgers equation for any initial data $u_0\in H^s(\R)$ with $s>3/2$ and for some $T>0$. In contrast, the authors of \cite{lyz1} further revealed that, for initial data $u_0\in H^s(\R)$ with $s>3/2$ and for some $T>0$, the solutions of the CH equation fail to converge uniformly with respect to the initial data in $L^\infty(0,T;H^s(\R))$  to the inviscid Burgers equation as $\alpha\to0$. In a recent publication \cite{C.L.L.W.L}, these findings were extended to more general Besov spaces $B^s_{2,r}(\R)$ with $s>\frac32$ and $1\leq r<\infty$. Aligning with this research direction, the aim of this work is to investigate the zero-filter limit for the Camassa-Holm equation with initial data $u_0$ belonging to the Besov spaces $B^s_{2,\infty}(\R)$. The primary result of this study is formulated as follows:


\begin{theorem}\label{th1} Assume that $s>\frac32$ and $\alpha\in[0,1]$. Let $\mathbf{S}_{t}^{\mathbf{\alpha}}(v_0)$  be the solutions of \eqref{alpha-c} with the initial data $v_0\in B^s_{2,\infty}(\R)$. Then there exist a  initial data $u_0\in B^s_{2,\infty}(\R)$ and a positive constant $\eta_0$ such that
$$\left\|\mathbf{S}_{t_n}^{\alpha_n}(u_0)-\mathbf{S}_{t_n}^{0}(u_0)\right\|_{
B^s_{2,\infty}}\geq \eta_0,$$
with  $\alpha_n\rightarrow 0$ and $t_n\rightarrow 0$.
\end{theorem}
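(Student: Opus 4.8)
The plan is to use a single fixed lacunary high--frequency datum and exploit the resonance $\alpha_n\cdot(\text{carrier frequency})=1$: at that scale the Camassa--Holm correction term behaves like an extra transport term, so the corresponding Fourier block of $\mathbf{S}^{\alpha_n}_t(u_0)$ is advected at a speed differing by a fixed $O(1)$ amount from its Burgers value, producing an $O(1)$ phase lag already at time $t_n\simeq2^{-n}$. Concretely, I would fix $\phi$ with $\widehat\phi\in C_c^\infty$ supported in a small ball around $0$ and $\phi(0)\neq0$, and take
\[u_0(x)=\sum_{j\ge j_0}2^{-js}\,\phi(x)\cos\!\Big(\tfrac{17}{12}2^{j}x\Big),\]
with $j_0$ large enough that the dyadic blocks have disjoint spectra, so that $\Delta_j u_0=2^{-js}\phi\cos(\tfrac{17}{12}2^jx)$ for $j\ge j_0$; then $u_0\in B^{s}_{2,\infty}(\R)$ with $\|u_0\|_{B^s_{2,\infty}}\lesssim\|\phi\|_{L^2}$, while $u_0\notin B^{s}_{2,r}(\R)$ for every $r<\infty$ --- so there is no conflict with \cite{C.L.L.W.L}. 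The sequences would be $\alpha_n=\tfrac{12}{17}2^{-n}$ (the inverse of the carrier frequency $\tfrac{17}{12}2^{n}$) and $t_n=\varepsilon2^{-n}$, where $\varepsilon>0$ is a small absolute constant to be fixed at the end; both tend to $0$.

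Next I would set up the difference $w:=u^{\alpha_n}-u^0$, where $u^{\alpha}=\mathbf{S}^{\alpha}_t(u_0)$ and $u^0=\mathbf{S}^0_t(u_0)$ solves Burgers \eqref{b}. One first records that, uniformly in $\alpha\in[0,1]$, these solutions exist on a common interval $[0,T]$ with $\|u^{\alpha}(t)\|_{B^s_{2,\infty}}\le C_0$, using the uniform operator bounds $\|(1-\alpha^2\pa_x^2)^{-1}\|\le1$, $\|\Lambda_\alpha\|\le1$ with $\Lambda_\alpha:=\alpha^2\pa_x^2(1-\alpha^2\pa_x^2)^{-1}$, and $\|\alpha^2\pa_x(1-\alpha^2\pa_x^2)^{-1}\|\le\alpha/2$ on $B^\sigma_{2,\infty}$. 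Taking the difference of \eqref{alpha-cr} and \eqref{b},
\[\pa_t w+3u^{\alpha_n}\pa_x w=-3w\,\pa_x u^{0}+R^{\alpha_n}(u^{\alpha_n}),\qquad w|_{t=0}=0,\]
where $R^{\alpha}(u)=-\pa_x\Lambda_\alpha(u^2)-\tfrac{\alpha^2}{2}\pa_x(1-\alpha^2\pa_x^2)^{-1}\big((\pa_x u)^2\big)$ is the right side of \eqref{alpha-cr}. Since $\pa_x\Lambda_\alpha$ loses only one derivative and $\Lambda_\alpha$ is uniformly bounded, one gets $\|R^{\alpha}(u^{\alpha_n})\|_{B^{s-1}_{2,\infty}}\lesssim\|u^{\alpha_n}\|_{B^s_{2,\infty}}^2\lesssim1$ uniformly in $n$; feeding this, together with $\|w\,\pa_x u^0\|_{B^{s-1}_{2,\infty}}\lesssim\|w\|_{B^{s-1}_{2,\infty}}$, into the Besov transport estimate for the displayed equation (legitimate since $B^{s-1}_{2,\infty}$ is an algebra embedded in $L^\infty$, as $s-1>\tfrac12$) and applying Grönwall would yield the pivotal bound $\|w(t)\|_{B^{s-1}_{2,\infty}}\lesssim t$ on $[0,T]$, uniformly in $n$. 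In particular $\|\Delta_n w(\tau)\|_{L^2}\lesssim2^{-n(s-1)}\tau\lesssim\varepsilon\,2^{-ns}$ for $\tau\le t_n$, i.e.\ the scale--$2^n$ part of $w$ never exceeds the size of $\Delta_n u_0$ --- this is what keeps the perturbation from polluting the analysis at scale $2^n$.

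The heart of the argument would then be to localize the $w$--equation at frequency $2^n$ and isolate the Camassa--Holm correction. Because $\alpha_n\cdot\tfrac{17}{12}2^n=1$, on the annulus $\{|\xi|\sim2^n\}$ the operator $\Lambda_{\alpha_n}$ equals $m(2^{-n}D)$ with the fixed smooth symbol $m(\zeta)=-\zeta^2/(1+\zeta^2)$, and since the spectrum of $\Delta_n u^{\alpha_n}(\tau)$ stays concentrated near $\pm\tfrac{17}{12}2^n$ over the tiny interval $[0,t_n]$, $\Lambda_{\alpha_n}$ acts there essentially as the scalar $m(1)=-\tfrac12$. Via the Bony decomposition this gives $\Delta_n R^{\alpha_n}_1(u^{\alpha_n})=S_{n-2}u^{\alpha_n}\,\pa_x\Delta_n u^{\alpha_n}+(\mathrm{l.o.t.})$; moving the $\pa_x\Delta_n w$ pieces to the left and replacing $S_{n-2}u^{\alpha_n}(\tau)$ by $S_{n-2}u_0$ (error $O(\tau)$ in $L^\infty$), the localized equation takes the form
\[\pa_t\Delta_n w+2\,S_{n-2}u_0\,\pa_x\Delta_n w=S_{n-2}u_0\,\pa_x\Delta_n u_0+(\mathrm{l.o.t.}),\qquad\Delta_n w|_{t=0}=0,\]
expressing that the scale--$2^n$ component of $u^{\alpha_n}$ is transported at speed $2\,S_{n-2}u_0$, against $3\,S_{n-2}u_0$ for Burgers. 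Each term in $(\mathrm{l.o.t.})$ --- paraproduct remainders, commutators $[\Delta_n,u^{\alpha_n}\pa_x]w$, the second correction term $\pa_x(1-\alpha_n^2\pa_x^2)^{-1}((\pa_x u^{\alpha_n})^2)$, and the time/spectral--concentration corrections --- carries an extra power of $\tau\le t_n$, of $\varepsilon$, of $\alpha_n$ or of $2^{-n}$ (this is where the bound of the previous step enters), so that after integrating over $[0,t_n]$ and multiplying by $2^{ns}$ it contributes only $o(1)+O(\varepsilon^2)$. A Duhamel formula over $[0,t_n]$ --- an interval over which the transport displaces the source by merely $O(\varepsilon2^{-n})$, an $O(\varepsilon)$ fraction of a wavelength, so that no oscillatory cancellation takes place --- would then give
\[\|\Delta_n w(t_n)\|_{L^2}=t_n\,\|S_{n-2}u_0\,\pa_x\Delta_n u_0\|_{L^2}\,(1+O(\varepsilon))+o(2^{-ns}),\]
and since $S_{n-2}u_0\to\phi\,b$ as $n\to\infty$, with $b(x)=\sum_{j\ge j_0}2^{-js}\cos(\tfrac{17}{12}2^jx)$ and $b(0)>0$, while $\|\pa_x\Delta_n u_0\|_{L^2}\sim2^{n(1-s)}$, the main term is $\gtrsim\varepsilon\,2^{-ns}$. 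Fixing $\varepsilon$ small enough to beat the $O(\varepsilon^2)$ error and then $n$ large enough to beat the $o(1)$ error, one obtains $\|\mathbf{S}^{\alpha_n}_{t_n}(u_0)-\mathbf{S}^{0}_{t_n}(u_0)\|_{B^s_{2,\infty}}\ge2^{ns}\|\Delta_n w(t_n)\|_{L^2}\ge\eta_0$ for a suitable $\eta_0>0$, with $\alpha_n\to0$ and $t_n\to0$, which is the claim.

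The hard part will be the bookkeeping in the last step: justifying rigorously that $\Lambda_{\alpha_n}$ --- which is \emph{not} uniformly smooth as $\alpha\to0$ --- nonetheless acts on the scale--$2^n$ component as the fixed scalar $-\tfrac12$ (this is exactly what the resonance $\alpha_n\cdot\tfrac{17}{12}2^n=1$ buys, turning $\Lambda_{\alpha_n}$ restricted to $\{|\xi|\sim2^n\}$ into the fixed zeroth--order symbol $m(2^{-n}D)$), and controlling the spectral spreading of $\Delta_n u^{\alpha_n}(\tau)$ as well as every $(\mathrm{l.o.t.})$ term uniformly in $n$. The uniform linear--in--$t$ estimate $\|w(t)\|_{B^{s-1}_{2,\infty}}\lesssim t$ is what makes all of this go through --- and the fact that one loses a full derivative there, rather than none, is exactly the failure Theorem \ref{th1} records. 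A secondary, purely technical nuisance is the care required for transport and commutator estimates in the non-separable space $B^{s-1}_{2,\infty}$, in particular at the borderline exponent $s=5/2$.
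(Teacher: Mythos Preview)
Your proposal is correct in outline and uses the same initial data $u_0$, the same resonance scaling $\alpha_n\sim 2^{-n}$, $t_n=\varepsilon 2^{-n}$, and ultimately the same lower-bound quantity $2^{ns}\|\Delta_n(u_0\pa_x u_0)\|_{L^2}\gtrsim 2^{n}$ as the paper. The difference is in the route: you track the difference $w=u^{\alpha_n}-u^{0}$ dynamically, deriving first the pivotal bound $\|w(t)\|_{B^{s-1}_{2,\infty}}\lesssim t$ by transport/Gr\"onwall, then localizing the $w$-equation at scale $2^n$ and running Duhamel with careful bookkeeping of all lower-order terms. The paper bypasses this entirely via a second-order time-Taylor expansion (Proposition~\ref{pro 3.1}): it shows $\|\mathbf{S}^{\alpha}_t(u_0)-u_0-t\mathbf{E}_0(\alpha,u_0)\|_{B^{s-2}_{2,\infty}}\leq Ct^2$ uniformly in $\alpha\in[0,1]$, so that $\mathbf{S}^{\alpha_n}_{t}(u_0)-\mathbf{S}^{0}_{t}(u_0)=t\big(\mathbf{E}_0(\alpha_n,u_0)-\mathbf{E}_0(0,u_0)\big)+O(t^2)_{B^{s-2}_{2,\infty}}$ and the problem reduces immediately to a \emph{static} lower bound on $2^{ns}\|\Delta_n\mathbf{J}_1\|_{L^2}$ with $\mathbf{J}_1=\alpha_n^2\pa_x^2(1-\alpha_n^2\pa_x^2)^{-1}(u_0\pa_x u_0)$; Plancherel then gives $\|\Delta_n\mathbf{J}_1\|_{L^2}\approx\|\Delta_n(u_0\pa_x u_0)\|_{L^2}$ directly.

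Your dynamical route buys nothing extra here and costs you the ``hard part'' you flag at the end. In particular, your claim that $\Lambda_{\alpha_n}$ acts as the \emph{scalar} $-\tfrac12$ on $\Delta_n$-frequencies is not quite right: across the full annulus $\tfrac34\cdot 2^n\le|\xi|\le\tfrac83\cdot 2^n$ the symbol $-\alpha_n^2\xi^2/(1+\alpha_n^2\xi^2)$ varies by $O(1)$, so moving the $\pa_x\Delta_n w$ piece to the left with coefficient exactly $1$ is not justified. This is easily fixed --- you only need the symbol to be bounded \emph{away from zero} on the block (which the resonance guarantees), and the $\pa_x\Delta_n w$ piece can simply stay on the right since by your own $B^{s-1}_{2,\infty}$ bound it contributes $O(\varepsilon^2)$ after integration --- but the paper's Taylor-expansion argument avoids the issue altogether and needs neither the localized transport equation nor any tracking of spectral spreading.
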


It should be noted that this finding underscores the critical dependence of solution convergence on the regularity and structure of the initial data, thereby contributing to a deeper understanding of the limiting behavior of solutions to nonlinear partial differential equations.

\section{Preliminaries}\label{sec2}
{\bf Notation}\; Throughout this paper, the symbol $C$ is employed to represent a generic positive constant that remains independent of the parameter $\alpha$. This constant may vary from line to line. For a Banach space $X$, its associated norm is denoted by $\|\cdot\|_{X}$. When considering an interval $I\subset\R$, the space of continuous functions on $I$ taking values in $X$ is denoted by $\mathcal{C}(I;X)$. For notational simplicity, the space $L^p(0,T;X)$  is occasionally abbreviated as $L_T^pX$.
For any tempered distribution $f\in \mathcal{S}'$, the Fourier transform $\mathcal{F}(f)$ and its inverse $\mathcal{F}^{-1}(f)$ are respectively defined as
$$\mathcal{F}(f)(\xi)=\int_{\R}e^{-ix\xi}f(x)\dd x \quad\text{and}\quad\mathcal{F}^{-1}(f)(\xi)=
\frac{1}{2\pi}\int_{\R}e^{ix\xi}f(x)\dd x, \quad\; \forall\xi\in\R.$$
Additionally, we revisit essential concepts related to the Littlewood-Paley decomposition, nonhomogeneous Besov spaces, and several pertinent properties that will be utilized in subsequent analysis.

\begin{proposition}[Littlewood-Paley decomposition, See \cite{B.C.D}] Let $\mathcal{B}:=\{\xi\in\mathbb{R}:|\xi|\leq \frac 4 3\}$ and $\mathcal{C}:=\{\xi\in\mathbb{R}:\frac 3 4\leq|\xi|\leq \frac 8 3\}.$
There exist two radial functions $\chi\in C_c^{\infty}(\mathcal{B})$ and $\varphi\in C_c^{\infty}(\mathcal{C})$ both taking values in $[0,1]$ such that
\begin{align*}
&\chi(\xi)+\sum_{j\geq0}\varphi(2^{-j}\xi)=1 \quad \forall \;  \xi\in \R,\\
&\frac{1}{2} \leq \chi^{2}(\xi)+\sum_{j \geq 0} \varphi^{2}(2^{-j} \xi) \leq 1\quad \forall \;  \xi\in \R.
\end{align*}
\end{proposition}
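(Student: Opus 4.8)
The plan is to construct both functions from a single smooth radial profile and then reduce every claim to a telescoping identity and one elementary inequality. First I would fix, by the standard smooth-cutoff construction (mollifying the indicator of a ball, or pasting together the usual $e^{-1/t}$ bumps), a radial function $\theta\in C_c^{\infty}(\R)$ depending only on $|\xi|$, taking values in $[0,1]$, nonincreasing in $|\xi|$, with $\theta(\xi)=1$ for $|\xi|\le\frac34$ and $\theta(\xi)=0$ for $|\xi|\ge\frac43$. Setting $\chi:=\theta$ immediately gives $\chi\in C_c^{\infty}(\mathcal{B})$ with range in $[0,1]$. I would then \emph{define}
$$\varphi(\xi):=\chi(\xi/2)-\chi(\xi),$$
and all the structural requirements on $\varphi$ will follow from the properties of $\chi$.

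The next step is to verify the support and range of $\varphi$. Since $\chi(\xi/2)=1$ and $\chi(\xi)=1$ whenever $|\xi|\le\frac34$, the two terms cancel there; and since $\chi(\xi/2)=0$ for $|\xi|\ge\frac83$ while $\chi(\xi)=0$ for $|\xi|\ge\frac43$, the difference vanishes for $|\xi|\ge\frac83$. Hence $\operatorname{supp}\varphi\subset\mathcal{C}$, so $\varphi\in C_c^{\infty}(\mathcal{C})$. Monotonicity of $\chi$ in $|\xi|$ gives $\chi(\xi/2)\ge\chi(\xi)$, whence $0\le\varphi(\xi)\le\chi(\xi/2)\le1$, so $\varphi$ takes values in $[0,1]$. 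For the partition-of-unity identity I would telescope: for every $N\ge0$,
$$\chi(\xi)+\sum_{j=0}^{N}\varphi(2^{-j}\xi)=\chi(\xi)+\sum_{j=0}^{N}\big(\chi(2^{-(j+1)}\xi)-\chi(2^{-j}\xi)\big)=\chi(2^{-(N+1)}\xi).$$
Letting $N\to\infty$ and using $\chi(2^{-(N+1)}\xi)\to\chi(0)=1$ (the value is in fact already $1$ once $2^{-(N+1)}|\xi|\le\frac34$) yields the first identity, the sum being locally finite.

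Finally, for the squared estimate the key point is a finite-overlap property. A direct computation gives $\operatorname{supp}\varphi(2^{-j}\cdot)\subset\{\,3\cdot2^{j-2}\le|\xi|\le 2^{j+3}/3\,\}$, and comparing these annuli for indices $j$ and $j+2$ (the lower endpoint $3\cdot2^{j}$ of the latter exceeds the upper endpoint $2^{j+3}/3$ of the former), together with $\operatorname{supp}\chi\subset\{|\xi|\le\frac43\}$ not meeting $\operatorname{supp}\varphi(2^{-j}\cdot)$ for $j\ge1$, shows that at most two of the functions $\chi,\varphi(2^{-j}\cdot)$ are nonzero at any fixed $\xi$. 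Denoting these two values by $a,b\in[0,1]$, the first identity already proved gives $a+b=1$, so $a^2+b^2=1-2ab$; since $0\le ab\le\frac14$ and $a^2\le a,\ b^2\le b$, one obtains $\frac12\le a^2+b^2\le a+b=1$, which is precisely the second inequality. The only genuinely technical step is verifying the finite-overlap geometry of the dyadic annuli; the rest reduces to the telescoping sum and the elementary bound $\frac12\le a^2+b^2\le1$ valid whenever $a,b\ge0$ and $a+b=1$.
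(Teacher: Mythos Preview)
Your construction is correct and complete; the paper itself does not supply a proof but simply cites \cite{B.C.D}, and what you wrote is precisely the standard argument given there (take a radial decreasing cutoff $\chi$, set $\varphi(\xi)=\chi(\xi/2)-\chi(\xi)$, telescope, and use the at-most-two overlap of the dyadic annuli together with $a+b=1\Rightarrow\frac12\le a^2+b^2\le1$). There is nothing to add or correct.
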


\begin{definition}[See \cite{B.C.D}]
For every $u\in \mathcal{S'}(\mathbb{R})$, the Littlewood-Paley dyadic blocks ${\Delta}_j$ are defined as follows
\begin{numcases}{\Delta_ju=}
0, & if\; $j\leq-2$;\nonumber\\
\chi(D)u=\mathcal{F}^{-1}(\chi \mathcal{F}u), & if\; $j=-1$;\nonumber\\
\varphi(2^{-j}D)u=\mathcal{F}^{-1}\big(\varphi(2^{-j}\cdot)\mathcal{F}u\big), & if\; $j\geq0$.\nonumber
\end{numcases}
In the inhomogeneous case, the following Littlewood-Paley decomposition makes sense
$$
u=\sum_{j\geq-1}{\Delta}_ju,\quad \forall\;u\in \mathcal{S'}(\mathbb{R}).
$$
\end{definition}
\begin{definition}[See \cite{B.C.D}]
Let $s\in\mathbb{R}$ and $(p,q)\in[1, \infty]^2$. The nonhomogeneous Besov space $B^{s}_{p,q}(\R)$ is defined by
\begin{align*}
B^{s}_{p,q}(\R):=\Big\{f\in \mathcal{S}'(\R):\;\|f\|_{B^{s}_{p,q}(\mathbb{R})}<\infty\Big\},
\end{align*}
where
\begin{numcases}{\|f\|_{B^{s}_{p,q}(\mathbb{R})}=}
\left(\sum_{j\geq-1}2^{sjq}\|\Delta_jf\|^r_{L^p(\mathbb{R})}\right)^{\fr1q}, &if\; $1\leq q<\infty$,\nonumber\\
\sup_{j\geq-1}2^{sj}\|\Delta_jf\|_{L^p(\mathbb{R})}, &if\; $q=\infty$.\nonumber
\end{numcases}
\end{definition}
\begin{remark}\label{re3}
It should be emphasized that the following embedding will be used frequently without declaration:
$$B^s_{p,q}(\R)\hookrightarrow B^t_{p,r}(\R)\quad\text{for}\;s>t\quad\text{or}\quad s=t,1\leq q\leq r\leq\infty.$$
\end{remark}
\begin{lemma}[See \cite{B.C.D}]\label{le-pro}
For $s>0$, $B^s_{2,\infty}(\R)\cap L^\infty(\R)$ is an algebra.
Moreover, we have for any $u,v \in B^s_{2,\infty}(\R)\cap L^\infty(\R)$
\begin{align*}
&\|uv\|_{B^s_{2,\infty}(\R)}\leq C\big(\|u\|_{B^s_{2,\infty}(\R)}\|v\|_{L^\infty(\R)}+\|v\|_{B^s_{2,\infty}(\R)}\|u\|_{L^\infty(\R)}\big).
\end{align*}
In particular, for $s>\frac12$, due to the fact $B^s_{2,\infty}(\R)\hookrightarrow L^\infty(\R)$, then we have
\begin{align*}
&\|uv\|_{B^s_{2,\infty}(\R)}\leq C\|u\|_{B^s_{2,\infty}(\R)}\|v\|_{B^s_{2,\infty}(\R)}.
\end{align*}
\end{lemma}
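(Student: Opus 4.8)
The plan is to prove the product estimate by Bony's paraproduct decomposition, which is the standard route for Besov algebra estimates and underlies the result quoted from \cite{B.C.D}. Writing $S_{j-1}u=\sum_{j'\leq j-2}\Delta_{j'}u$ and $\tilde\Delta_j v=\sum_{|j'-j|\leq1}\Delta_{j'}v$, I would decompose
$$uv=T_uv+T_vu+R(u,v),$$
where $T_uv=\sum_{j}S_{j-1}u\,\Delta_jv$ is the paraproduct and $R(u,v)=\sum_{j}\Delta_ju\,\tilde\Delta_jv$ is the remainder. It then suffices to estimate each of the three pieces in $B^s_{2,\infty}(\R)$ and add the results.

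For the paraproducts the key fact is spectral localization: the Fourier support of $S_{j-1}u\,\Delta_jv$ lies in an annulus of size $\sim 2^j$, so $\Delta_k(S_{j-1}u\,\Delta_jv)$ vanishes unless $|k-j|\leq N_0$ for a fixed $N_0$. Using $\|S_{j-1}u\|_{L^\infty}\lesssim\|u\|_{L^\infty}$ together with H\"older's inequality and $\|\Delta_jv\|_{L^2}\leq 2^{-js}\|v\|_{B^s_{2,\infty}}$, I would get
$$2^{ks}\|\Delta_k(T_uv)\|_{L^2}\lesssim\|u\|_{L^\infty}\sum_{|j-k|\leq N_0}2^{js}\|\Delta_jv\|_{L^2}\lesssim\|u\|_{L^\infty}\|v\|_{B^s_{2,\infty}},$$
and taking the supremum over $k$ yields $\|T_uv\|_{B^s_{2,\infty}}\lesssim\|u\|_{L^\infty}\|v\|_{B^s_{2,\infty}}$; the symmetric term $T_vu$ is handled identically. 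Note this step costs nothing on $s$ and works for every $s\in\R$.

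The remainder is where the hypothesis $s>0$ is used, and I expect this to be the main (though routine) point. Here the Fourier support of $\Delta_ju\,\tilde\Delta_jv$ is contained in a ball of radius $\sim 2^j$, so $\Delta_k$ selects only the terms with $j\geq k-N_1$. Estimating $\|\Delta_ju\,\tilde\Delta_jv\|_{L^2}\leq\|\Delta_ju\|_{L^2}\|\tilde\Delta_jv\|_{L^\infty}\lesssim 2^{-js}\|u\|_{B^s_{2,\infty}}\|v\|_{L^\infty}$ and summing,
$$2^{ks}\|\Delta_kR(u,v)\|_{L^2}\lesssim\|u\|_{B^s_{2,\infty}}\|v\|_{L^\infty}\,2^{ks}\!\!\sum_{j\geq k-N_1}\!\!2^{-js}\lesssim\|u\|_{B^s_{2,\infty}}\|v\|_{L^\infty},$$
where the geometric series converges precisely because $s>0$ (the $\ell^\infty$ structure of $B^s_{2,\infty}$ causes no trouble, since only a supremum is taken rather than a sum over $k$). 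This gives $\|R(u,v)\|_{B^s_{2,\infty}}\lesssim\|u\|_{B^s_{2,\infty}}\|v\|_{L^\infty}$, which is dominated by the right-hand side of the claim.

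Adding the three bounds produces the first inequality, and hence the algebra property of $B^s_{2,\infty}(\R)\cap L^\infty(\R)$ for $s>0$. For the second inequality I would simply invoke the embedding $B^s_{2,\infty}(\R)\hookrightarrow L^\infty(\R)$, valid for $s>\tfrac12$ through $B^s_{2,\infty}\hookrightarrow B^{1/2}_{2,1}\hookrightarrow L^\infty$, so that $\|u\|_{L^\infty}\lesssim\|u\|_{B^s_{2,\infty}}$ and $\|v\|_{L^\infty}\lesssim\|v\|_{B^s_{2,\infty}}$; substituting these into the first inequality gives $\|uv\|_{B^s_{2,\infty}}\lesssim\|u\|_{B^s_{2,\infty}}\|v\|_{B^s_{2,\infty}}$, completing the proof.
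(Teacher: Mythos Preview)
Your proof via Bony's paraproduct decomposition is correct and is precisely the standard argument from \cite{B.C.D}; the paper itself does not supply a proof of this lemma but merely cites that reference, so your approach coincides with the intended one.
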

\begin{lemma}[See \cite{B.C.D}]\label{le-pro1}
For $s>\frac32$, there holds
\begin{align*}
&\|uv\|_{B^{s-2}_{2,\infty}(\R)}\leq C\|u\|_{B^{s-2}_{2,\infty}(\R)}\|v\|_{B^{s-1}_{2,\infty}(\R)}.
\end{align*}
\end{lemma}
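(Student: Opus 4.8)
The plan is to exhibit the obstruction through a single fixed multi-scale datum, exploiting a resonance between the frequencies present in the data and the filtering scale $\alpha^{-1}$. The mechanism is that the operator
\begin{align*}
\mathbf{E}^{\alpha}(u):=-\alpha^2\partial_x^3(1-\alpha^2\partial_x^2)^{-1}(u^2)-\tfrac{\alpha^2}{2}\partial_x(1-\alpha^2\partial_x^2)^{-1}(\partial_xu)^2,
\end{align*}
which is exactly the difference between the right-hand sides of \eqref{alpha-cr} and \eqref{b}, does \emph{not} vanish as $\alpha\to0$ on functions whose frequencies reach the scale $\alpha^{-1}$: its first piece carries the Fourier multiplier $i\alpha^2\xi^3/(1+\alpha^2\xi^2)$, whose modulus is comparable to $|\xi|$ once $|\xi|\gtrsim\alpha^{-1}$, so it behaves like a full derivative there, while the second piece has a symbol bounded by $\alpha/2$ and is negligible.

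First I would fix the datum
\begin{align*}
u_0=v+\sum_{m\geq1}2^{-ms}\cos(2^mx)\,\phi(x),
\end{align*}
with $v,\phi$ fixed Schwartz bumps satisfying $\int(v\phi)^2>0$; since each summand is spectrally localized near $|\xi|=2^m$ with $B^s_{2,\infty}$-size $O(1)$, one has $u_0\in B^s_{2,\infty}(\R)$ (the norm being a supremum over scales), while $u_0\notin B^s_{2,r}$ for $r<\infty$, consistently with \cite{C.L.L.W.L}. I then set $N_n=2^n$, $\alpha_n=N_n^{-1}\in(0,1]$ and $t_n=cN_n^{-1}$ for a small constant $c$ to be fixed, so that $\alpha_n,t_n\to0$. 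The core computation is a lower bound for the forcing at the resonant block $j_n$ with $2^{j_n}\sim N_n$: the frequency-$N_n$ part of $u_0^2$ comes mainly from the low--high product $2v\cdot2^{-ns}\cos(N_nx)\phi$, of $L^2$-norm $\sim2^{-ns}$; applying $\mathbf{E}^{\alpha_n}$ multiplies this block by a symbol of modulus $\sim N_n/2$, whence
\begin{align*}
\|\mathbf{E}^{\alpha_n}(u_0)\|_{B^s_{2,\infty}}\geq2^{j_n s}\|\Delta_{j_n}\mathbf{E}^{\alpha_n}(u_0)\|_{L^2}\gtrsim N_n^{s}\cdot N_n\cdot2^{-ns}\sim N_n,
\end{align*}
the high--high interactions and the second ($\alpha^2$) piece contributing only $O(1)$, so there is no cancellation for $n$ large.

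Writing $\mathbf{w}=\mathbf{S}^{\alpha_n}_t(u_0)-\mathbf{S}^0_t(u_0)$ and using \eqref{alpha-cr}, \eqref{b}, I would Duhamel the difference equation
\begin{align*}
\partial_t\mathbf{w}+3\mathbf{S}^{\alpha_n}_t\partial_x\mathbf{w}+3\mathbf{w}\,\partial_x\mathbf{S}^0_t=\mathbf{E}^{\alpha_n}(\mathbf{S}^{\alpha_n}_t),\qquad\mathbf{w}(0)=0,
\end{align*}
and split $\mathbf{w}(t_n)=t_n\mathbf{E}^{\alpha_n}(u_0)+\mathbf{R}_1+\mathbf{R}_2$, where $\mathbf{R}_1=-3\int_0^{t_n}(\mathbf{S}^{\alpha_n}_\tau\partial_x\mathbf{w}+\mathbf{w}\,\partial_x\mathbf{S}^0_\tau)\,\dd\tau$ and $\mathbf{R}_2=\int_0^{t_n}[\mathbf{E}^{\alpha_n}(\mathbf{S}^{\alpha_n}_\tau)-\mathbf{E}^{\alpha_n}(u_0)]\,\dd\tau$. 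Two uniform ingredients drive the error analysis: the $\alpha$-independent bound $\|\mathbf{S}^{\alpha}_t(u_0)\|_{B^s_{2,\infty}}\leq C$ on a common interval $[0,T]$, and---since $\mathbf{E}^\alpha$ loses only one derivative uniformly in $\alpha$, so that $\|\mathbf{E}^\alpha(u)\|_{B^{s-1}_{2,\infty}}\lesssim\|u\|_{B^s_{2,\infty}}^2$ by Lemmas \ref{le-pro}--\ref{le-pro1}---the weak-norm rate $\|\mathbf{w}(\tau)\|_{B^{s-1}_{2,\infty}}+\|\mathbf{S}^{\alpha_n}_\tau-u_0\|_{B^{s-1}_{2,\infty}}\lesssim\tau$, obtained from a transport estimate in $B^{s-1}_{2,\infty}$. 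Restricting to the block $j_n$, a Bony decomposition together with the extra factor $N_n$ produced by the operator converts this $O(\tau)$ smallness into $2^{j_n s}\|\Delta_{j_n}\mathbf{R}_i\|_{L^2}\lesssim N_n^2t_n^2+Ct_n$ for $i=1,2$.

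The \emph{main obstacle} is exactly this last step: at the top regularity $B^s_{2,\infty}$ the remainders are a priori of the same order $N_nt_n$ as the main term, since both the transport nonlinearity and $\mathbf{E}^{\alpha_n}$ lose a derivative; the gain must come entirely from combining frequency localization at the single block $j_n$ with the quantitative $B^{s-1}_{2,\infty}$-rate $O(\tau)$ for $\mathbf{w}$ and for $\mathbf{S}^{\alpha_n}_\tau-u_0$, so that the derivative loss costs only one extra power of $N_n$ rather than two. Granting this,
\begin{align*}
\|\mathbf{w}(t_n)\|_{B^s_{2,\infty}}\geq2^{j_n s}\|\Delta_{j_n}\mathbf{w}(t_n)\|_{L^2}\gtrsim t_nN_n-C\big(N_n^2t_n^2+t_n\big),
\end{align*}
and with $t_n=cN_n^{-1}$ the right-hand side is $\gtrsim c-Cc^2-Cc/N_n\geq\eta_0>0$ for $c$ small and $n$ large, which is the claim since $\alpha_n,t_n\to0$. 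A subtlety I would flag is that for $r=\infty$ one cannot expect strong time-continuity in $B^s_{2,\infty}$; the argument is arranged to use only the uniform $L^\infty_TB^s_{2,\infty}$ bound and the genuine Lipschitz-in-time control in the weaker space $B^{s-1}_{2,\infty}$, both of which are available.
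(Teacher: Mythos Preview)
Your proposal does not address the stated lemma at all. The statement in question is Lemma~\ref{le-pro1}, a product estimate
\[
\|uv\|_{B^{s-2}_{2,\infty}}\leq C\|u\|_{B^{s-2}_{2,\infty}}\|v\|_{B^{s-1}_{2,\infty}}\qquad(s>\tfrac32),
\]
which the paper does not prove but simply cites from \cite{B.C.D} (it is a standard consequence of Bony's paraproduct decomposition, using that $B^{s-1}_{2,\infty}\hookrightarrow L^\infty$ for $s>\tfrac32$). Your write-up, by contrast, is an outline of a proof of Theorem~\ref{th1}---the non-convergence of the zero-filter limit in $B^s_{2,\infty}$---and says nothing about the product inequality itself; indeed, you invoke Lemma~\ref{le-pro1} as an ingredient rather than establishing it.

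If your intent was actually Theorem~\ref{th1}, then your strategy is close in spirit to the paper's: a lacunary datum $u_0\in B^s_{2,\infty}$, resonance at the block $j_n\sim n$ with $\alpha_n\sim2^{-n}$, a first-order Taylor expansion in time with the forcing $\mathbf{E}^{\alpha_n}(u_0)$ as leading term, and a quadratic-in-$t$ remainder controlled in a weaker norm. The paper organizes the remainder via Proposition~\ref{pro 3.1} (an $O(t^2)$ bound in $B^{s-2}_{2,\infty}$ for $\mathbf{S}^\alpha_t(u_0)-u_0-t\mathbf{E}_0(\alpha,u_0)$, applied separately at $\alpha=\alpha_n$ and $\alpha=0$), which cleanly converts to a loss of $2^{2n}$ at the single block $\Delta_n$; you instead propose to analyze the difference equation for $\mathbf{w}=\mathbf{S}^{\alpha_n}_t-\mathbf{S}^0_t$ directly and bound $\mathbf{R}_1,\mathbf{R}_2$ at block $j_n$. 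That is workable but, as you yourself flag, the step ``$2^{j_ns}\|\Delta_{j_n}\mathbf{R}_i\|_{L^2}\lesssim N_n^2t_n^2+Ct_n$'' is exactly where the difficulty sits, and you have not carried it out; the paper's route via Proposition~\ref{pro 3.1} avoids this by never estimating the transport commutator at the top regularity. Either way, none of this bears on Lemma~\ref{le-pro1}.
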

\begin{lemma}[See \cite{B.C.D}]\label{le3}
Let $s>0$ and $f,g\in {\rm{Lip}(\R)}\cap B^s_{2,\infty}(\R)$. Then we have
\bbal
\big|\big|(2^{js}||[\Delta_j,f]\pa_xg||_{L^2(\mathbb{R})})_{j\geq-1}\big|\big|_{\ell^q}\leq
\begin{cases}
C\big(\|\partial_x f\|_{L^\infty(\R)}\|g\|_{B^{s}_{2,\infty}(\R)}+\|f\|_{B^s_{2,\infty}(\R)}\|\pa_xg\|_{L^\infty(\R)}\big),\\
C\big(\|\partial_x f\|_{L^\infty(\R)}\|g\|_{B^{s}_{2,\infty}(\R)}+\|\pa_x f\|_{B^s_{2,\infty}(\R)}\|g\|_{L^\infty(\R)}\big).
\end{cases}
\end{align*}
\end{lemma}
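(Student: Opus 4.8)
The plan is to prove this standard commutator estimate (the $B^s_{2,\infty}$ version of the Bahouri--Chemin--Danchin commutator lemma) via Bony's paraproduct decomposition, isolating the genuine commutator---which alone carries a derivative gain---from the remaining low--high and high--high interactions, and then weighting by $2^{js}$. Throughout I write $S_{k-1}=\sum_{\ell\le k-2}\Delta_\ell$ for the low-frequency cut-off, $T_ab=\sum_k S_{k-1}a\,\Delta_k b$ for the paraproduct, and $R(a,b)=\sum_{|k-k'|\le1}\Delta_k a\,\Delta_{k'}b$ for the remainder, so that $ab=T_ab+T_ba+R(a,b)$.

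First I would write $[\Delta_j,f]\pa_xg=\Delta_j(f\pa_xg)-f\,\Delta_j\pa_xg$ and apply Bony's decomposition to both products $f\pa_xg$ and $f\,(\Delta_j\pa_xg)$. Subtracting, the commutator splits into three groups: the paraproduct commutator $[\Delta_j,T_f]\pa_xg$; the symmetric low--high piece $\Delta_jT_{\pa_xg}f-T_{\Delta_j\pa_xg}f$; and the remainder piece $\Delta_jR(f,\pa_xg)-R(f,\Delta_j\pa_xg)$.

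The heart of the matter, and the step I expect to be the main obstacle, is the paraproduct commutator $[\Delta_j,T_f]\pa_xg$. By spectral support this reduces to the finite sum $\sum_{|k-j|\le4}[\Delta_j,S_{k-1}f]\Delta_k\pa_xg$. Writing $\Delta_ju=2^{j}h(2^{j}\cdot)\ast u$ with $h=\mathcal{F}^{-1}\varphi$, a typical term equals $\int 2^{j}h\big(2^{j}(x-y)\big)\big(S_{k-1}f(y)-S_{k-1}f(x)\big)(\Delta_k\pa_xg)(y)\,\dd y$. The mean value theorem bounds the bracket by $\|\pa_xf\|_{L^\infty}|x-y|$, and the finiteness of the first moment $\int|z|\,|h(z)|\,\dd z$ produces the decisive factor $2^{-j}$. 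This $2^{-j}$ exactly cancels the extra derivative carried by $\Delta_k\pa_xg$ (note $\|\Delta_k\pa_xg\|_{L^2}\lesssim 2^{k}\|\Delta_kg\|_{L^2}$ and $|k-j|\le4$), so that after multiplying by $2^{js}$ this group is controlled by $\|\pa_xf\|_{L^\infty}\,\sup_k 2^{ks}\|\Delta_kg\|_{L^2}\lesssim\|\pa_xf\|_{L^\infty}\|g\|_{B^s_{2,\infty}}$, the term common to both lines of the conclusion.

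For the low--high and high--high pieces I would estimate each paraproduct/remainder term directly by an $L^\infty\times L^2$ product bound. The spectral supports force either near-diagonal summation or a tail $\sum_{k\gtrsim j}$, which converges precisely because $s>0$; this is the only place the hypothesis $s>0$ enters. The two displayed variants of the conclusion then correspond to the two ways of disposing of the single derivative in these pieces: either leaving it on $g$, i.e. bounding $\|S_{k-1}\pa_xg\|_{L^\infty}\le\|\pa_xg\|_{L^\infty}$ and pairing with $\|\Delta_kf\|_{L^2}\lesssim 2^{-ks}\|f\|_{B^s_{2,\infty}}$, which gives $\|\pa_xg\|_{L^\infty}\|f\|_{B^s_{2,\infty}}$ (first line); or transferring it onto $f$ by Bernstein's inequality, i.e. $\|S_{k-1}\pa_xg\|_{L^\infty}\lesssim 2^{k}\|g\|_{L^\infty}$ and absorbing the $2^{k}$ into $2^{k}\|\Delta_kf\|_{L^2}\approx\|\Delta_k\pa_xf\|_{L^2}$, which gives $\|\pa_xf\|_{B^s_{2,\infty}}\|g\|_{L^\infty}$ (second line). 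Finally, collecting the three groups, multiplying by $2^{js}$ and taking the supremum over $j\ge-1$ yields the asserted bound; the general $\ell^q$ statement follows verbatim upon matching the Besov summation index to $q$.
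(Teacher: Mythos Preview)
The paper does not prove this lemma at all: it is stated as \emph{See \cite{B.C.D}} and simply quoted from Bahouri--Chemin--Danchin without argument. Your proposal is therefore not being compared against a proof in the paper but against the standard proof in that reference, and your sketch---Bony decomposition, first-order Taylor/mean-value gain on the paraproduct commutator $[\Delta_j,T_f]\partial_xg$, direct $L^\infty\times L^2$ bounds on the remaining low--high and remainder pieces with the $s>0$ hypothesis used to sum the tails---is exactly that standard proof (cf.\ Lemma~2.100 in \cite{B.C.D}). The two displayed variants arising from the two ways of placing the derivative is also precisely how the reference presents it, so your approach is correct and matches the cited source.
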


\begin{lemma}[See \cite{C.L.L.W.L}]\label{lemm1}
For any $\alpha \in(0,1]$, there holds
\bbal
\left|\int_{\R}(1-\alpha^2\pa^2_x)^{-1}\Delta_j(vu_x)\Delta_ju\dd x\right|\leq C\|\pa_xv\|_{L^\infty(\R)}\|\Delta_ju\|^2_{L^2(\R)}+C\|[\Delta_j,v]\pa_xu\|_{L^2(\R)}\|
\Delta_ju\|_{L^2(\R)}.
\end{align*}
\end{lemma}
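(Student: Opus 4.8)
The plan is to reduce the whole estimate to the single Fourier multiplier $T_\alpha:=(1-\alpha^2\pa_x^2)^{-1}$, whose symbol $(1+\alpha^2\xi^2)^{-1}$ lies in $(0,1]$. The three facts I would use repeatedly are that $T_\alpha$ commutes with $\pa_x$ and with $\Delta_j$, that it is self-adjoint on $L^2(\R)$, and that $\|T_\alpha f\|_{L^2}\le\|f\|_{L^2}$. Writing $w:=\Delta_j u$ and inserting the commutator identity $\Delta_j(vu_x)=[\Delta_j,v]\pa_xu+v\pa_xw$, I would decompose
\[
\int_{\R}(1-\alpha^2\pa_x^2)^{-1}\Delta_j(vu_x)\,w\,\dd x=\int_{\R}T_\alpha\big([\Delta_j,v]\pa_xu\big)\,w\,\dd x+\int_{\R}T_\alpha\big(v\pa_xw\big)\,w\,\dd x=:I_j^{(1)}+I_j^{(2)}.
\]
The term $I_j^{(1)}$ is disposed of at once: Cauchy--Schwarz together with $\|T_\alpha\|_{L^2\to L^2}\le1$ gives $|I_j^{(1)}|\le\|[\Delta_j,v]\pa_xu\|_{L^2}\|w\|_{L^2}$, which is precisely the second term on the right-hand side. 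All the remaining effort goes into showing $|I_j^{(2)}|\le C\|\pa_xv\|_{L^\infty}\|w\|_{L^2}^2$ with $C$ independent of $\alpha$.

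For $I_j^{(2)}$ I would first move $T_\alpha$ onto $w$ by self-adjointness and set $\tilde w:=T_\alpha w$, so that $I_j^{(2)}=\int_{\R}v\,(\pa_xw)\,\tilde w\,\dd x$; then, using $w=(1-\alpha^2\pa_x^2)\tilde w$, I replace $\pa_xw=\pa_x\tilde w-\alpha^2\pa_x^3\tilde w$. This yields a ``Burgers-type'' contribution $\int_{\R}v\,(\pa_x\tilde w)\,\tilde w\,\dd x=-\tfrac12\int_{\R}(\pa_xv)\,\tilde w^2\,\dd x$, harmless after one integration by parts, together with the genuinely new cubic term $-\alpha^2\int_{\R}v\,(\pa_x^3\tilde w)\,\tilde w\,\dd x$. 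Integrating the cubic term by parts exactly once produces $\alpha^2\int_{\R}(\pa_xv)\,\tilde w\,\pa_x^2\tilde w\,\dd x+\alpha^2\int_{\R}v\,\pa_x\tilde w\,\pa_x^2\tilde w\,\dd x$, and in the last integral the identity $v\,\pa_x\tilde w\,\pa_x^2\tilde w=\tfrac12 v\,\pa_x\big((\pa_x\tilde w)^2\big)$ with one further integration by parts gives $-\tfrac{\alpha^2}{2}\int_{\R}(\pa_xv)\,(\pa_x\tilde w)^2\,\dd x$. The point of this bookkeeping is that every surviving term then carries exactly one factor $\pa_xv$.

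The quantitative step reduces to three Plancherel bounds for $\tilde w=T_\alpha w$, all uniform in $\alpha\in(0,1]$: namely $\|\tilde w\|_{L^2}\le\|w\|_{L^2}$, $\alpha^2\|\pa_x^2\tilde w\|_{L^2}\le\|w\|_{L^2}$, and $\alpha^2\|\pa_x\tilde w\|_{L^2}^2\le\|w\|_{L^2}^2$, which follow directly from the elementary inequalities $\frac{\alpha^2\xi^2}{1+\alpha^2\xi^2}\le1$ and $\frac{\alpha^2\xi^2}{(1+\alpha^2\xi^2)^2}\le1$. Combining these with Hölder's inequality bounds each of the three terms above by $\|\pa_xv\|_{L^\infty}\|w\|_{L^2}^2$, so that $|I_j^{(2)}|\le C\|\pa_xv\|_{L^\infty}\|w\|_{L^2}^2$, and adding $I_j^{(1)}$ finishes the proof. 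I expect the only delicate point to be the cubic term $-\alpha^2\int_{\R}v\,(\pa_x^3\tilde w)\,\tilde w\,\dd x$: one must integrate by parts just enough to expose a single $\pa_xv$ and then estimate the factor $\alpha^2\pa_x^2\tilde w$ directly in $L^2$, resisting the temptation to iterate the integration by parts, which would spuriously create $\pa_x^2v$ or $\pa_x^3v$ and wreck the claimed form of the estimate. Throughout, the manipulations are justified first for Schwartz data and extended by density, using that $w=\Delta_j u$ is frequency-localized and $v\in\mathrm{Lip}(\R)$.
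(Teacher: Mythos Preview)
The paper does not supply its own proof of this lemma; it is quoted from \cite{C.L.L.W.L} and used as a black box. Your argument is a correct self-contained proof: the commutator decomposition isolates $I_j^{(1)}$, the self-adjointness of $T_\alpha$ transfers the operator onto $w$, and the substitution $\pa_xw=\pa_x\tilde w-\alpha^2\pa_x^3\tilde w$ followed by two integrations by parts leaves only terms of the form $\int(\pa_xv)\cdot(\text{quadratic in }\tilde w,\pa_x\tilde w,\pa_x^2\tilde w)\,\dd x$, each controlled via the symbol bounds $\frac{1}{1+\alpha^2\xi^2},\ \frac{\alpha^2\xi^2}{1+\alpha^2\xi^2},\ \frac{\alpha^2\xi^2}{(1+\alpha^2\xi^2)^2}\le 1$. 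There is no gap, and the constant is manifestly independent of $\alpha\in(0,1]$.
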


\begin{lemma}[See \cite{C.L.L.W.L}]\label{lemm2}
For any $\alpha \in(0,1]$, there holds
\bbal
\left|\frac{\alpha^2}{2}\int_{\R}\pa_x(1-\alpha^2\pa^2_x)^{-1}\Delta_j(\partial_xu\pa_xv)\cdot \Delta_jw\dd x\right|\leq C2^{-j}\|\Delta_j(\partial_xu\pa_xv)\|_{L^2}\|\Delta_jw\|_{L^2}.
\end{align*}
\end{lemma}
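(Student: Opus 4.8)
The plan is to reduce the bilinear integral to a one-dimensional Fourier multiplier estimate by Cauchy--Schwarz, and then to exploit the frequency localization of the dyadic block $\Delta_j$ together with a pointwise decay bound on the symbol of the operator $\frac{\alpha^2}{2}\pa_x(1-\alpha^2\pa_x^2)^{-1}$.

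First, writing $g:=\pa_xu\,\pa_xv$ and applying the Cauchy--Schwarz inequality in $x$, I would bound
$$\left|\frac{\alpha^2}{2}\int_{\R}\pa_x(1-\alpha^2\pa_x^2)^{-1}\Delta_j g\cdot\Delta_jw\,\dd x\right|\leq \frac{\alpha^2}{2}\left\|\pa_x(1-\alpha^2\pa_x^2)^{-1}\Delta_jg\right\|_{L^2}\|\Delta_jw\|_{L^2},$$
so that the entire statement reduces to the operator bound $\frac{\alpha^2}{2}\|\pa_x(1-\alpha^2\pa_x^2)^{-1}\Delta_jg\|_{L^2}\leq C2^{-j}\|\Delta_jg\|_{L^2}$. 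Next I would observe that $\frac{\alpha^2}{2}\pa_x(1-\alpha^2\pa_x^2)^{-1}$ is the Fourier multiplier with symbol
$$m_\alpha(\xi)=\frac{i\alpha^2\xi}{2(1+\alpha^2\xi^2)},\qquad |m_\alpha(\xi)|=\frac{\alpha^2|\xi|}{2(1+\alpha^2\xi^2)}.$$
Since $\Delta_jg$ has Fourier support contained in the annulus $\{\,|\xi|\sim2^j\,\}$ for $j\geq0$ (respectively the ball $\{|\xi|\leq\tfrac43\}$ for $j=-1$), Plancherel's theorem yields
$$\left\|m_\alpha(D)\Delta_jg\right\|_{L^2}\leq\Big(\sup_{\xi\in\operatorname{supp}\widehat{\Delta_jg}}|m_\alpha(\xi)|\Big)\|\Delta_jg\|_{L^2},$$
so it only remains to bound the symbol on the relevant frequency set by $C2^{-j}$.

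The heart of the matter is therefore the pointwise estimate on $|m_\alpha|$. For $j\geq0$ I would factor
$$|m_\alpha(\xi)|=\frac{1}{2|\xi|}\cdot\frac{\alpha^2\xi^2}{1+\alpha^2\xi^2}\leq\frac{1}{2|\xi|}\leq C2^{-j},$$
using $\frac{\alpha^2\xi^2}{1+\alpha^2\xi^2}\leq1$ and $|\xi|\geq\frac34 2^j$ on the support; this is precisely where the $2^{-j}$ gain comes from, and it is uniform in $\alpha\in(0,1]$. The one point requiring separate care is the low-frequency block $j=-1$, where $|\xi|$ may vanish and the $1/|\xi|$ bound is useless. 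There I would instead set $t=\alpha|\xi|$ and use the uniform bound $\frac{t}{1+t^2}\leq\frac12$ to obtain $|m_\alpha(\xi)|=\frac{\alpha}{2}\cdot\frac{t}{1+t^2}\leq\frac{\alpha}{4}\leq\frac14\leq C2^{-j}$, since $2^{-j}=2$ for $j=-1$. Combining the two cases fixes the constant $C$, and feeding this back through the two preceding displays completes the proof. I do not anticipate any genuine obstruction here: the estimate is a direct consequence of frequency localization and the decay of the symbol, the only subtlety being the separate treatment of $j=-1$ so as to avoid dividing by a small frequency.
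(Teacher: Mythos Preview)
Your argument is correct and complete: Cauchy--Schwarz reduces the bilinear integral to an $L^2$ multiplier bound, and the pointwise symbol estimate $|m_\alpha(\xi)|\le \min\{1/(2|\xi|),\,\alpha/4\}$ on the support of $\widehat{\Delta_j g}$, combined with Plancherel, delivers the $2^{-j}$ gain uniformly in $\alpha\in(0,1]$; the separate handling of $j=-1$ is the right way to avoid the spurious $1/|\xi|$ singularity. The paper does not actually prove this lemma---it is quoted from \cite{C.L.L.W.L}---so there is no in-paper argument to compare against; your approach is the standard one and is precisely in the spirit of the one-line justification the paper gives for the neighboring Lemma~\ref{lemm3} (``definition of $\Delta_j$ and Plancherel's identity'').
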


\begin{lemma}\label{lemm3}
Let $\sigma\in \R$. For any $\alpha \in(0,1]$, we have
\bbal
&\big\|\alpha^2\pa^2_x(1-\alpha^2\pa^2_x)^{-1}u\big\|_{B^\sigma_{2,\infty}}+\big\|\alpha\pa_x(1-\alpha^2
\pa^2_x)^{-1}u\big\|_{B^\sigma_{2,r}}\leq C||u||_{B^\sigma_{2,\infty}},
\\& \big\|\alpha^2\pa_x(1-\alpha^2\pa^2_x)^{-1}u\big\|_{B^\sigma_{2,\infty}}\leq C||u||_{B^{\sigma-1}_{2,\infty}}.
\end{align*}
\end{lemma}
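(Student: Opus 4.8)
The plan is to prove both bounds by a direct Littlewood--Paley/Fourier multiplier argument, estimating the symbol of each operator block by block. For the first inequality, write $m_\alpha(\xi) := \frac{\alpha^2\xi^2}{1+\alpha^2\xi^2}$, which is the symbol of $-\alpha^2\pa^2_x(1-\alpha^2\pa^2_x)^{-1}$, and note $0\le m_\alpha(\xi)\le 1$ uniformly in $\alpha\in(0,1]$ and $\xi\in\R$. Since $\Delta_j$ is a Fourier truncation onto a dyadic annulus (or ball for $j=-1$) and $m_\alpha$ is a smooth, uniformly bounded symbol with derivatives satisfying the Mikhlin--H\"ormander bounds $|\xi|^k|\pa_\xi^k m_\alpha(\xi)|\le C_k$ uniformly in $\alpha$, the operator $m_\alpha(D)$ is bounded on $L^2$ with norm $\le 1$ and, more to the point, commutes with $\Delta_j$ up to harmless error; hence
$$\|\Delta_j\big(\alpha^2\pa^2_x(1-\alpha^2\pa^2_x)^{-1}u\big)\|_{L^2}=\|m_\alpha(D)\Delta_j u\|_{L^2}\le \|\Delta_j u\|_{L^2}.$$
Multiplying by $2^{\sigma j}$ and taking the supremum over $j\ge-1$ gives the $B^\sigma_{2,\infty}$ bound. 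For the second operator in the first line, $\alpha\pa_x(1-\alpha^2\pa^2_x)^{-1}$ has symbol $\frac{i\alpha\xi}{1+\alpha^2\xi^2}$, whose modulus is bounded by $\min\{\alpha|\xi|,\frac{1}{\alpha|\xi|}\}\le \frac12$ uniformly, and which again satisfies uniform Mikhlin bounds; the same block-by-block argument yields the $B^\sigma_{2,r}$ estimate for every $r\in[1,\infty]$ (here one simply keeps the $\ell^r$ sum in $j$ instead of the supremum, using that the $L^2\to L^2$ operator norm bound is uniform in $j$).

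For the second inequality, the operator $\alpha^2\pa_x(1-\alpha^2\pa^2_x)^{-1}$ has symbol $\frac{i\alpha^2\xi}{1+\alpha^2\xi^2}$. The point is to gain one derivative: write
$$\frac{i\alpha^2\xi}{1+\alpha^2\xi^2}=\frac{i\alpha^2\xi^2}{1+\alpha^2\xi^2}\cdot\frac{1}{\xi}= m_\alpha(\xi)\cdot\frac{1}{i\xi},$$
so that $\alpha^2\pa_x(1-\alpha^2\pa^2_x)^{-1}=m_\alpha(D)\circ\pa_x^{-1}$. On each dyadic block with $j\ge 0$, the operator $\pa_x^{-1}$ acts as a multiplier of size $\sim 2^{-j}$, and $m_\alpha(D)$ is bounded on $L^2$ by $1$, so $\|\Delta_j(\alpha^2\pa_x(1-\alpha^2\pa^2_x)^{-1}u)\|_{L^2}\le C2^{-j}\|\Delta_j u\|_{L^2}$; multiplying by $2^{\sigma j}$ gives $2^{\sigma j}\|\cdots\|_{L^2}\le C\,2^{(\sigma-1)j}\|\Delta_j u\|_{L^2}\le C\|u\|_{B^{\sigma-1}_{2,\infty}}$. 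The low-frequency block $j=-1$ is handled separately and trivially, since there $|\xi|\le\frac43$ and $\big|\frac{\alpha^2\xi}{1+\alpha^2\xi^2}\big|\le\alpha^2|\xi|\le C$, so $\|\Delta_{-1}(\cdots)\|_{L^2}\le C\|\Delta_{-1}u\|_{L^2}$, which is absorbed into $\|u\|_{B^{\sigma-1}_{2,\infty}}$ because on the ball the weights $2^{\sigma(-1)}$ and $2^{(\sigma-1)(-1)}$ are comparable constants.

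The only mild subtlety---and the step I would be most careful about---is justifying the "commutes with $\Delta_j$" statement rigorously: strictly speaking $m_\alpha(D)\Delta_j u$ is not literally $\Delta_j(m_\alpha(D)u)$, but since $\varphi(2^{-j}\cdot)$ and $m_\alpha$ are both Fourier multipliers they do commute exactly, so $\Delta_j(m_\alpha(D)u)=m_\alpha(D)(\Delta_j u)$ holds as an identity on $\mathcal S'$. Thus the real content is just the uniform-in-$\alpha$ bound $\|m_\alpha(D)\|_{L^2\to L^2}\le 1$ (immediate from $\|m_\alpha\|_{L^\infty}\le 1$ and Plancherel) together with the elementary symbol inequalities $|m_\alpha(\xi)|\le1$, $\big|\frac{\alpha\xi}{1+\alpha^2\xi^2}\big|\le\frac12$, $\big|\frac{\alpha^2\xi}{1+\alpha^2\xi^2}\big|\le\min\{\alpha^2|\xi|,\ |\xi|^{-1}\}$. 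No obstacle of substance remains; the lemma follows by assembling these pointwise symbol estimates with Plancherel's theorem on each Littlewood--Paley block.
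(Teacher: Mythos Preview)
Your proof is correct and is precisely the argument the paper has in mind: the authors' proof consists of the single sentence ``The results can be easily deduced from the definition of $\Delta_j$ and Plancherel's identity,'' and what you have written is exactly that deduction spelled out block by block. The Mikhlin--H\"ormander reference is unnecessary here since everything is in $L^2$ and Plancherel plus the pointwise symbol bounds suffice, as you yourself note in your final paragraph; the minor sign slip in the factorization $\frac{i\alpha^2\xi}{1+\alpha^2\xi^2}=m_\alpha(\xi)\cdot\frac{1}{i\xi}$ is harmless for the norm estimate.
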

\begin{proof}
The results can be easily deduced from the definition of $\Delta_j$ and Plancherel's identity. Here, we omit the details.
\end{proof}

\section{Proof of the main theorem}

\quad  
In this section, we aim to demonstrate that the zero-filter limit associated with the Camassa-Holm equation fails to converge with respect to the initial conditions specified in the Besov space $B^s_{2,\infty}(\R)$. Initially, we establish that the solution $\mathbf{S}_{t}^{\mathbf{\alpha}}(v_0)$, where $v_0\in B^s_{2,\infty}(\R)$, remains uniformly bounded for all values of the parameter $\alpha$ within the interval $[0,1]$.

\begin{proposition}\label{pro 3.0}
For $s>\frac32$ and $\alpha\in[0,1]$, let $\mathbf{S}_{t}^{\mathbf{\alpha}}(v_0)$ be the solution of \eqref{alpha-c} with the initial data $v_0\in B^s_{2,\infty}(\R)$. Then, there exists a time $T=T(\|v_0\|_{B^s_{2,\infty}},s)>0$ such that $\mathbf{S}_{t}^{\mathbf{\alpha}}(v_0)$ belongs to $L^\infty_T(B_{2, \infty}^s)$ and satisfies
\bbal
||\mathbf{S}_{t}^{\mathbf{\alpha}}(v_0)||_{L^\infty_T(B_{2, \infty}^s)}\leq C||v_0||_{B_{2, \infty}^s}, \quad \forall \alpha \in[0,1].
\end{align*}
\end{proposition}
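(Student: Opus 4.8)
The plan is to perform a Littlewood-Paley energy estimate on the equation in the form \eqref{alpha-c}, uniformly in $\alpha\in[0,1]$, and then close a Gronwall-type inequality to obtain a uniform existence time and bound. The case $\alpha=0$ is just the Burgers equation \eqref{b}, for which the standard $B^s_{2,\infty}$ estimate applies, so the real content is the uniform-in-$\alpha$ control of the dispersive/nonlocal terms on the right-hand side of \eqref{alpha-c}.

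First I would apply $\Delta_j$ to \eqref{alpha-c}, take the $L^2$ inner product with $\Delta_j u$, and integrate in $x$. The transport term $u\partial_x u$ is handled by the commutator trick: write $\Delta_j(u\partial_x u)=u\partial_x\Delta_j u+[\Delta_j,u]\partial_x u$, so that $\int u\partial_x\Delta_j u\,\Delta_j u\,\dd x=-\tfrac12\int (\partial_x u)(\Delta_j u)^2\,\dd x$, which is bounded by $\|\partial_x u\|_{L^\infty}\|\Delta_j u\|_{L^2}^2$; the commutator contributes $\|[\Delta_j,u]\partial_x u\|_{L^2}\|\Delta_j u\|_{L^2}$, which after multiplying by $2^{js}$ and taking $\sup_j$ is controlled by Lemma~\ref{le3} via $\|\partial_x u\|_{L^\infty}\|u\|_{B^s_{2,\infty}}$, hence by $C\|u\|_{B^s_{2,\infty}}^2$ using $B^{s-1}_{2,\infty}\hookrightarrow L^\infty$. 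For the nonlocal forcing term I would split it as the two pieces appearing in \eqref{alpha-cr}: the term $\alpha^2\partial_x^3(1-\alpha^2\partial_x^2)^{-1}(u^2)$ is estimated in $B^s_{2,\infty}$ using Lemma~\ref{lemm3} (first inequality, with the operator $\alpha^2\partial_x^2(1-\alpha^2\partial_x^2)^{-1}$ bounded on $B^\sigma_{2,\infty}$ and one remaining $\partial_x$ costing one derivative) together with the algebra property of Lemma~\ref{le-pro}, giving $\le C\|u\|_{B^s_{2,\infty}}^2$ uniformly in $\alpha$; similarly the term $\tfrac{\alpha^2}{2}\partial_x(1-\alpha^2\partial_x^2)^{-1}(\partial_x u)^2$ is handled by the second inequality of Lemma~\ref{lemm3} (gaining one derivative back) plus Lemma~\ref{le-pro} applied to $(\partial_x u)^2$. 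Alternatively, and perhaps more cleanly for the inner-product estimate, one may invoke Lemmas~\ref{lemm1} and~\ref{lemm2} directly, which are tailored to bound exactly $\int (1-\alpha^2\partial_x^2)^{-1}\Delta_j(\cdots)\,\Delta_j u\,\dd x$ uniformly in $\alpha$.

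Collecting terms yields, for each $j\ge-1$,
\begin{align*}
\frac{\dd}{\dd t}\|\Delta_j u\|_{L^2}\le C\|u\|_{B^s_{2,\infty}}\|\Delta_j u\|_{L^2}+2^{-js}c_j(t)\,C\|u\|_{B^s_{2,\infty}}^2,
\end{align*}
with $\sup_j c_j(t)\le 1$. Multiplying by $2^{js}$ and taking the supremum over $j$ gives $\frac{\dd}{\dd t}\|u\|_{B^s_{2,\infty}}\le C\|u\|_{B^s_{2,\infty}}^2$, whence by integrating the ODE $y'\le Cy^2$ one gets $\|u(t)\|_{B^s_{2,\infty}}\le \frac{\|v_0\|_{B^s_{2,\infty}}}{1-Ct\|v_0\|_{B^s_{2,\infty}}}$; choosing $T=\frac{1}{2C\|v_0\|_{B^s_{2,\infty}}}$ forces $\|u(t)\|_{B^s_{2,\infty}}\le 2\|v_0\|_{B^s_{2,\infty}}$ on $[0,T]$, which is the claimed bound with $T$ depending only on $\|v_0\|_{B^s_{2,\infty}}$ and $s$, and independent of $\alpha$. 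A minor technical point worth remarking is that this a priori estimate should be run on a smooth approximating sequence (or on spectrally truncated data) and then passed to the limit, since $B^s_{2,\infty}$ is not separable; but since local well-posedness of \eqref{alpha-c} for fixed $\alpha$ is already known, only the uniformity of the existence time and the bound is new here.

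The main obstacle is ensuring that every estimate of the right-hand side operators is genuinely \emph{uniform in $\alpha\in[0,1]$}: the symbols $\alpha^2\xi^2/(1+\alpha^2\xi^2)$ and $\alpha\xi/(1+\alpha^2\xi^2)$ are bounded independently of $\alpha$, and $\alpha^2\xi/(1+\alpha^2\xi^2)=\alpha\cdot\alpha\xi/(1+\alpha^2\xi^2)$ even gains a factor $\alpha$ while behaving like $|\xi|^{-1}$ at high frequency — these are exactly the content of Lemma~\ref{lemm3}. No Besov regularity is lost because the worst operator $\alpha^2\partial_x^3(1-\alpha^2\partial_x^2)^{-1}$ acts like a single derivative with an $O(1)$ Fourier multiplier in front, matching the derivative count in the Burgers term $3u\partial_x u$. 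Once this uniformity is in place, the argument is the classical Besov-space energy method for quasilinear transport equations and closes without difficulty.
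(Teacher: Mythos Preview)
Your overall strategy---Littlewood--Paley localization, $L^2$ pairing with $\Delta_j u$, commutator handling of the transport term, then closing a quadratic Gronwall inequality---matches the paper's proof, and your ``alternative'' of invoking Lemmas~\ref{lemm1} and~\ref{lemm2} for the nonlocal pieces is exactly the route the paper takes.

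There is, however, a genuine gap in your \emph{first} option for the nonlocal term. Estimating $\alpha^2\partial_x^3(1-\alpha^2\partial_x^2)^{-1}(u^2)$ directly in $B^s_{2,\infty}$ via Lemma~\ref{lemm3} gives
\[
\big\|\alpha^2\partial_x^3(1-\alpha^2\partial_x^2)^{-1}(u^2)\big\|_{B^s_{2,\infty}}
\le C\|\partial_x(u^2)\|_{B^s_{2,\infty}}
= 2C\|u\,\partial_x u\|_{B^s_{2,\infty}},
\]
and this last quantity is \emph{not} controlled by $C\|u\|_{B^s_{2,\infty}}^2$: the product estimate of Lemma~\ref{le-pro} produces a term $\|u\|_{L^\infty}\|\partial_x u\|_{B^s_{2,\infty}}\sim\|u\|_{L^\infty}\|u\|_{B^{s+1}_{2,\infty}}$, one derivative too many. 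The multiplier $\alpha^2\xi^2/(1+\alpha^2\xi^2)$ is bounded by~$1$ but does not smooth, so the leftover $\partial_x$ is a real loss. (Your treatment of the $(\partial_x u)^2$ piece via the second inequality of Lemma~\ref{lemm3} is fine; that operator genuinely gains one derivative.) Hence the first route cannot close at the $B^s_{2,\infty}$ level when the forcing is treated as an external source.

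This is precisely why the paper works with the form~\eqref{alpha-c} and invokes Lemma~\ref{lemm1}: writing $\partial_x(1-\alpha^2\partial_x^2)^{-1}(u^2)=2(1-\alpha^2\partial_x^2)^{-1}(u\,\partial_x u)$ and exploiting the commutator decomposition $\Delta_j(u\,\partial_x u)=u\,\partial_x\Delta_j u+[\Delta_j,u]\partial_x u$ \emph{inside} the pairing with $\Delta_j u$ recovers the missing derivative through an integration by parts, exactly as for the bare transport term. So your ``alternative'' is not merely cleaner---it is necessary. With that route selected, your argument coincides with the paper's.
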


\begin{proof}
When $\alpha=0$, this result is obvious for Bugers equatuion. Consequently, we focus on establishing the proof for any $\alpha\in(0,1]$. For a fixed $\alpha>0$, by the classical theory of local well-posedness, it is well-established that there exists a $T_\alpha=T(\|v_0\|_{B^s_{2,\infty}},s,\alpha)>0$ such that the Camassa-Holm equation admits a unique solution $\mathbf{S}_{t}^{\mathbf{\alpha}}(v_0)\in\mathcal{C}([0,T_\alpha];B^s_{2,\infty})$.

We aim to demonstrate the existence of a time $T=T(\|v_0\|_{B^s_{2,\infty}},s)>0$ such that $T\leq T_{\alpha}$ for all $\alpha\in(0,1]$. Furthermore, we establish the existence of a constant $C>0$, independent of the parameter $\alpha$, satisfying the following uniform estimate: 
\begin{align}\label{m1}
\big\|\mathbf{S}_{t}^{\mathbf{\alpha}}(v_0)\big\|_{L_T^{\infty} (B^s_{2,\infty})} \leq C||v_0||_{B^s_{2,\infty}}, \quad \forall \alpha \in(0,1].
\end{align}
To simplify notation, we set $v=\mathbf{S}_{t}^{\mathbf{\alpha}}(v_0)$.
Applying the operator $\Delta_j$ to $\eqref{alpha-c}$, multiplying $\Delta_ju$  and integrating the result over $\R$, we obtain
\begin{align}
\frac12\frac{\dd }{\dd t}\|\Delta_jv\|^2_{L^2}&=\frac12\int_{\R}\pa_xv|\Delta_jv|^2\dd x-\int_{\R}[\Delta_j,v]\pa_xv\cdot \Delta_jv\dd x\label{y1}\\
&\quad-2\int_{\R}(1-\alpha^2\pa^2_x)^{-1}\Delta_j(vv_x)\cdot \Delta_jv\dd x\label{y2}\\
&\quad
-\frac{\alpha^2}{2}\int_{\R}\pa_x(1-\alpha^2\pa^2_x)^{-1}\Delta_j(\partial_xv)^2\cdot \Delta_jv\dd x.\label{y3}
\end{align}
To bound \eqref{y1}, it is easy to obtain
\bbal
|\eqref{y1}|
\leq C\|\pa_xv\|_{L^\infty}\|\Delta_ju\|^2_{L^2}+C\|[\Delta_j,v]\pa_x v\|_{L^2}\|\Delta_jv\|_{L^2}.
\end{align*}
To bound \eqref{y2}, by Lemma \ref{lemm1}, we have
\bbal
|\eqref{y2}|\leq C\|\pa_xv\|_{L^\infty}\|\Delta_jv\|^2_{L^2}+C\|[\Delta_j,v]\pa_xv\|_{L^2}\|\Delta_jv\|_{L^2}.
\end{align*}
To bound \eqref{y3}, we can deduce from Lemma \ref{lemm2} that
\bbal
|\eqref{y3}|\leq C2^{-j}\|\Delta_j(\partial_xv)^2\|_{L^2}\|\Delta_jv\|_{L^2}.
\end{align*}
Combining the above estimations yields that
\begin{align*}
\frac{\dd }{\dd t}\|\Delta_jv\|_{L^2}\leq C\Big(\|\pa_xv\|_{L^\infty}\|\Delta_jv\|_{L^2}+\|[\Delta_j,v]\pa_xv\|_{L^2}
+2^{-j}\|\Delta_j(\partial_xv)^2\|_{L^2}\Big),
\end{align*}
which leads to
\bbal
\|\Delta_jv\|_{L^2}\leq \|\Delta_jv_0\|_{L^2}+C\int^t_0\Big(\|\pa_xv\|_{L^\infty}\|\Delta_jv\|_{L^2}+\|[\Delta_j,v]\pa_xv\|_{
L^2}+2^{-j}\big\|\Delta_j(\partial_xv)^2\big\|_{L^2}\Big)\dd \tau.
\end{align*}
Multiplying the above inequality  by $2^{js}$ and taking the $\ell^\infty$ norm over $\Z$, we obtain from Lemma \ref{le-pro} and Lemma \ref{le3} that
\bbal
\|v(t)\|_{B^s_{2,\infty}}&\leq \|(v_0)\|_{B^s_{2,\infty}} + C\int^t_0\Big(\|\pa_xv\|_{L^\infty}\|v\|_{B^s_{2,\infty}}+\big\|(\pa_xv)^2\big\|_{B^{s-1}_{2,\infty}}\Big)\dd \tau
\\&\leq \|(v_0)\|_{B^s_{2,\infty}} + C\int^t_0\|v\|^2_{B^s_{2,\infty}}\dd \tau.
\end{align*}
Consequently, employing a continuity argument, we establish the existence of  $T=T(\|v_0\|_{B^s_{2,\infty}},s)>0$ such that \eqref{m1} holds uniformly w.r.t. $\alpha\in(0,1]$. Thus, we complete the proof of this proposition.
\end{proof}

\begin{proposition}\label{pro 3.1}
For $s>\frac32$ and $\alpha\in[0,1]$, let $\mathbf{S}_{t}^{\mathbf{\alpha}}(v_0)$ be the solution of \eqref{alpha-c} with the initial data $v_0\in B^s_{2,\infty}(\R)$. Then, for $t\in[0,T]$, we have
\bbal
\f\|\mathbf{S}^\alpha_{t}(v_0)-v_0-t\mathbf{E}_0(\alpha,v_0)\g\|_{B_{2,\infty}^{s-2}}\leq Ct^{2},
\end{align*}
where
\bbal
\mathbf{E}_0(\alpha,v_0):=-3v_0\pa_xv_0-\alpha^2\partial^3_x \left(1-\alpha^2 \partial_x^2\right)^{-1}v_0^2-\frac{\alpha^2}{2}\partial_x \left(1-\alpha^2 \partial_x^2\right)^{-1} (\partial_xv_0)^2.
\end{align*}
\end{proposition}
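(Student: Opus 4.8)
The plan is to estimate the remainder $\mathbf{R}(t):=\mathbf{S}^\alpha_t(v_0)-v_0-t\mathbf{E}_0(\alpha,v_0)$ in $B^{s-2}_{2,\infty}$ by writing it as an integral in time of the difference between the nonlinearity evaluated at the solution $v(\tau)=\mathbf{S}^\alpha_\tau(v_0)$ and the nonlinearity evaluated at the data $v_0$. Concretely, from the reformulation \eqref{alpha-cr} we have, with $v=\mathbf{S}^\alpha_t(v_0)$,
\begin{align*}
v(t)-v_0=\int_0^t\mathbf{E}(\alpha,v(\tau))\,\dd\tau,\qquad
\mathbf{E}(\alpha,w):=-3w\pa_xw-\alpha^2\pa_x^3(1-\alpha^2\pa_x^2)^{-1}w^2-\frac{\alpha^2}{2}\pa_x(1-\alpha^2\pa_x^2)^{-1}(\pa_xw)^2,
\end{align*}
so that $\mathbf{R}(t)=\int_0^t\big(\mathbf{E}(\alpha,v(\tau))-\mathbf{E}(\alpha,v_0)\big)\,\dd\tau$. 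It then suffices to show $\|\mathbf{E}(\alpha,v(\tau))-\mathbf{E}(\alpha,v_0)\|_{B^{s-2}_{2,\infty}}\le C\tau$ uniformly in $\alpha\in(0,1]$, after which integration in $\tau$ over $[0,t]$ gives the claimed $Ct^2$ bound.

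The estimate on the difference of nonlinearities splits into three pieces matching the three terms of $\mathbf{E}$. For the transport term, $v\pa_xv-v_0\pa_xv_0=\tfrac12\pa_x(v^2-v_0^2)=\tfrac12\pa_x\big((v-v_0)(v+v_0)\big)$; I would bound this in $B^{s-2}_{2,\infty}$ using $\|\pa_x f\|_{B^{s-2}_{2,\infty}}\le\|f\|_{B^{s-1}_{2,\infty}}$ together with the product estimate of Lemma \ref{le-pro} (or Lemma \ref{le-pro1}), obtaining a bound by $C\|v-v_0\|_{B^{s-1}_{2,\infty}}(\|v\|_{B^{s-1}_{2,\infty}}+\|v_0\|_{B^{s-1}_{2,\infty}})$. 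For the two dispersive terms, I would use Lemma \ref{lemm3}: the operator $\alpha^2\pa_x^3(1-\alpha^2\pa_x^2)^{-1}=\pa_x\cdot\alpha^2\pa_x^2(1-\alpha^2\pa_x^2)^{-1}$ maps $B^{s-1}_{2,\infty}\to B^{s-2}_{2,\infty}$ with constant uniform in $\alpha$, so that term contributes $\le C\|v^2-v_0^2\|_{B^{s-1}_{2,\infty}}\le C\|v-v_0\|_{B^{s-1}_{2,\infty}}(\|v\|_{B^{s-1}_{2,\infty}}+\|v_0\|_{B^{s-1}_{2,\infty}})$ by Lemma \ref{le-pro}; similarly $\alpha^2\pa_x(1-\alpha^2\pa_x^2)^{-1}$ maps $B^{s-3}_{2,\infty}\to B^{s-2}_{2,\infty}$ uniformly, and $\|(\pa_xv)^2-(\pa_xv_0)^2\|_{B^{s-3}_{2,\infty}}$ is controlled by Lemma \ref{le-pro1} (with $s-1>\tfrac12$) by $C\|\pa_x(v-v_0)\|_{B^{s-3}_{2,\infty}}\|\pa_x(v+v_0)\|_{B^{s-2}_{2,\infty}}\le C\|v-v_0\|_{B^{s-2}_{2,\infty}}(\|v\|_{B^{s-1}_{2,\infty}}+\|v_0\|_{B^{s-1}_{2,\infty}})$.

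Putting the three pieces together yields
\begin{align*}
\|\mathbf{E}(\alpha,v(\tau))-\mathbf{E}(\alpha,v_0)\|_{B^{s-2}_{2,\infty}}\le C\|v(\tau)-v_0\|_{B^{s-1}_{2,\infty}}\big(\|v(\tau)\|_{B^{s-1}_{2,\infty}}+\|v_0\|_{B^{s-1}_{2,\infty}}\big).
\end{align*}
By Proposition \ref{pro 3.0}, $\|v(\tau)\|_{B^s_{2,\infty}}\le C\|v_0\|_{B^s_{2,\infty}}$ on $[0,T]$, so the second factor is bounded by a constant; and for the first factor one needs $\|v(\tau)-v_0\|_{B^{s-1}_{2,\infty}}\le C\tau$ on $[0,T]$, which follows from integrating $\pa_tv=\mathbf{E}(\alpha,v)$ once in $B^{s-1}_{2,\infty}$ and invoking Proposition \ref{pro 3.0} to bound $\|\mathbf{E}(\alpha,v(\sigma))\|_{B^{s-1}_{2,\infty}}$ (each of the three terms of $\mathbf{E}$ maps $B^s_{2,\infty}$-data into $B^{s-1}_{2,\infty}$ with $\alpha$-uniform constant, by Lemmas \ref{le-pro}, \ref{le-pro1} and \ref{lemm3}). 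Substituting $\|v(\tau)-v_0\|_{B^{s-1}_{2,\infty}}\le C\tau$ gives $\|\mathbf{E}(\alpha,v(\tau))-\mathbf{E}(\alpha,v_0)\|_{B^{s-2}_{2,\infty}}\le C\tau$, and integrating over $\tau\in[0,t]$ completes the proof. The main obstacle is bookkeeping the regularity indices so that every product and every Fourier-multiplier estimate is applied in a range where it is valid and, crucially, where the constant does not blow up as $\alpha\to0$ — this is exactly what Lemma \ref{lemm3} is designed to handle, so once the indices are lined up correctly the argument is routine.
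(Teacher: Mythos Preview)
Your proposal is correct and follows exactly the paper's two-step argument: first integrate $\partial_\tau v$ once to get $\|v(\tau)-v_0\|_{B^{s-1}_{2,\infty}}\le C\tau$, then bound $\int_0^t\|\mathbf{E}(\alpha,v(\tau))-\mathbf{E}(\alpha,v_0)\|_{B^{s-2}_{2,\infty}}\,\dd\tau$ term by term via Lemmas \ref{le-pro}, \ref{le-pro1} and \ref{lemm3}. The only slip is in the third piece: invoking Lemma \ref{le-pro1} at regularity $s-3$ (i.e.\ with indices $s-3$ and $s-2$) is not justified for $\tfrac32<s\le\tfrac52$, since the sum of indices is then nonpositive. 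The paper avoids this by using the \emph{first} estimate of Lemma \ref{lemm3}, writing $\alpha^2\partial_x=\alpha\cdot(\alpha\partial_x)$, so that $(\partial_xv)^2-(\partial_xv_0)^2$ is estimated directly in $B^{s-2}_{2,\infty}$ where Lemma \ref{le-pro1} applies as stated (at the cost of a harmless extra factor $\alpha$); equivalently, in your scheme you can simply use the embedding $B^{s-2}_{2,\infty}\hookrightarrow B^{s-3}_{2,\infty}$ before applying the product estimate.
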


\begin{proof}
For simplicity, we denote $v(t)=\mathbf{S}^\alpha_t(v_0)$.
According to Lemma \ref{le-pro}, Lemma \ref{lemm3} and the fundamental theorem of calculus in the time variable, we obtain that, for $t\in[0,T]$ 
\bal\label{u1}
\|v(t)-v_0\|_{B^{s-1}_{2,\infty}}
&\leq \int^t_0\|\pa_\tau v\|_{B^{s-1}_{2,\infty}} \dd\tau
\nonumber\\&\leq \int^t_0\f(3\|v \pa_xv\|_{B^{s-1}_{2,\infty}}+\alpha^2\f\|\partial^3_x \left(1-\alpha^2 \partial_x^2\right)^{-1}v^2\g\|_{B^{s-1}_{2,\infty}}\g)\dd\tau\nonumber\\&\quad+\int^t_0\frac{\alpha^2}{2}\f\|\partial_x \left(1-\alpha^2 \partial_x^2\right)^{-1} (\partial_xv)^2\g\|_{B^{s-1}_{2,\infty}} \dd\tau
\nonumber\\&\leq C t\f(\f\|v \pa_xv\g\|_{L_t^\infty B^{s-1}_{2,\infty}}+\alpha\f\|(\partial_xv)^2\g\|_{L_t^\infty B^{s-1}_{2,\infty}}\g)\nonumber
\\&\leq C t(1+\alpha)\|v\|^2_{L_t^\infty B^s_{2,\infty}}\leq C t\|v_0\|^2_{B_{2,\infty}^{s}}.
\end{align}
By leveraging Lemmas \ref{le-pro}-\ref{le-pro1}, Lemma \ref{lemm3}, and reapplying the fundamental theorem of calculus with respect to the time variable, we derive the following estimate for $t\in[0,T]$:
\bbal
&\|v(t)-v_0-t\mathbf{E}_0(\alpha,v_0)\|_{B^{s-2}_{2,\infty}}
\leq \int^t_0\big\|\pa_\tau v-\mathbf{E}_0(\alpha,v_0)\big\|_{B^{s-2}_{2,\infty}} \dd\tau
\\&\leq \int^t_0\f(3\|v\pa_xv-v_0\pa_xv_0\|_{B^{s-2}_{2,\infty}} +\alpha^2\f\|\partial^3_x \left(1-\alpha^2 \partial_x^2\right)^{-1}\f(v^2-v_0^2\g)\g\|_{B_{2,\infty}^{s-2}}\g)\dd\tau\\
&\quad+ \int^t_0\frac{\alpha^2}{2}\f\|\partial_x \left(1-\alpha^2 \partial_x^2\right)^{-1} \f((\partial_xv)^2-(\partial_xv_0)^2\g)\g\|_{B^{s-2}_{2,\infty}} \dd\tau\\
&\leq C \int^t_0\f(\|v^2-v_0^2\|_{B_{2,\infty}^{s-1}} +\alpha\f\|(\partial_xv)^2-(\partial_xv_0)^2\g\|_{B^{s-2}_{2,\infty}}\g) \dd\tau
\\&\leq C\int^t_0\|v-v_0\|_{B_{2,\infty}^{s-1}} \dd\tau\les Ct^2,
\end{align*}
where the final step employs the bound provided in \eqref{u1}.
This finishes the proof of the proposition.
\end{proof}

Prior to establishing the proof of Theorem \ref{th1}, it is necessary to introduce smooth, radially symmetric cut-off functions to localize the frequency domain. Specifically,

Before proving theorem 1.1, we need to introduce smooth, radial cut-off functions to localize the frequency region. Precisely,
let $\widehat{\phi}\in \mathcal{C}^\infty_0(\mathbb{R})$ be an even, real-valued and non-negative function defined on $\R$, satisfying the following conditions: 
\begin{numcases}{\widehat{\phi}(\xi)=}
1,&if $|\xi|\leq \frac{1}{4}$,\nonumber\\
0,&if $|\xi|\geq \frac{1}{2}$.\nonumber
\end{numcases}
We define the initial data $u_0(x)$ as
\bbal
u_0(x):=\sum\limits^{\infty}_{n=0}2^{-ns} \phi(x)\cos \big(\frac{17}{12}2^{n}x\big).
\end{align*}
By invoking the definition of the Besov space and considering the support of $\varphi(2^{-j}\cdot)$, we obtain
\bbal
\|u_0\|_{B^{s}_{2,\infty}}&=\sup_{j\geq -1}2^{ js}\|\Delta_{j}u_0\|_{L^2}\\
&=\sup_{j\geq 0}\bi\|\phi(x)\cos \bi(\frac{17}{12}2^{j}x\bi)\bi\|_{L^2}\leq C.
\end{align*}

{\bf Proof of Theorem \ref{th1}.}
Note that
\bbal
&\mathbf{S}^{\alpha_n}_{t}(u_0)=u_0+\underbrace{\mathbf{S}^{\alpha_n}_{t}(u_0)-u_0-t\mathbf{E}_0(\alpha_n,u_0)}_{=:\,\mathbf{I}_1}+t\mathbf{E}_0(\alpha_n,u_0),\\
&\mathbf{S}^{0}_{t}(u_0)=u_0+\underbrace{\mathbf{S}^{0}_{t}(u_0)-u_0-t\mathbf{E}_0(0,u_0)}_{=:\,\mathbf{I}_2}+t\mathbf{E}_0(0,u_0),\\
&\mathbf{E}_0(\alpha_n,u_0)-\mathbf{E}_0(0,u_0)=\underbrace{\alpha^2_n\partial^2_x \left(1-\alpha^2_n \partial_x^2\right)^{-1}(u_0\pa_xu_0)}_{=:\,\mathbf{J}_1}-\underbrace{\frac{\alpha^2_n}{2}\partial_x \left(1-\alpha^2_n \partial_x^2\right)^{-1} (\partial_xu_0)^2}_{=:\,\mathbf{J}_2},
\end{align*}
we deduce from Proposition \ref{pro 3.1}, Lemma \ref{le-pro} and Lemma \ref{lemm3}  that
\bal\label{ql}
&\f\|\mathbf{S}^{\alpha_n}_{t}(u_0)-\mathbf{S}^0_{t}(u_0)\g\|_{B^s_{2,\infty}}\nonumber
\\
\geq&~t2^{ns}\|\De_n[\mathbf{E}_0(\alpha_n,u_0)-\mathbf{E}_0(0,u_0)]\|_{L^2}
-2^{ns}\f\|\De_n\mathbf{I}_1\g\|_{L^2}-2^{ns}\f\|\De_n\mathbf{I}_2\g\|_{L^2}\nonumber\\
\geq&~t2^{ns}\|\De_n\mathbf{J}_1\|_{L^2}-t2^{ns}\|\De_n\mathbf{J}_2\|_{L^2}-2^{2n}\f\|\mathbf{I}_1\g\|_{B^{s-2}_{2,\infty}}-2^{2n}\f\|\mathbf{I}_2\g\|_{B^{s-2}_{2,\infty}}\nonumber \\
\geq&~ t2^{ns}\|\De_n\mathbf{J}_1\|_{L^2}-||\mathbf{J}_2||_{B^s_{2,\infty}}-C2^{2n}t^2\nonumber \\\geq&~ t2^{ns}\|\De_n\mathbf{J}_1\|_{L^2}-C\alpha_n||(\pa_xu_0)^2||_{B^{s-1}_{2,\infty}}-C2^{2n}t^2 \nonumber
\\\geq&~ t2^{ns}\|\De_n\mathbf{J}_1\|_{L^2}-C2^{-n}-Ct^22^{2n}.
\end{align}
Applying Plancherel's identity, we deduce the following equivalence:
\bbal
\f\|\Delta_n[\alpha^2_n\partial^2_x \left(1-\alpha^2_n \partial_x^2\right)^{-1}(u_0\pa_xu_0)]\g\|_{L^2}\approx\|\De_n(u_0\pa_xu_0)\|_{L^2},
\end{align*}
which, in conjunction with \eqref{ql}, leads to the inequality:
\bal\label{ql-1}
\f\|\mathbf{S}^{\alpha_n}_{t}(u_0)-\mathbf{S}^0_{t}(u_0)\g\|_{B^s_{2,\infty}}\geq ct2^{ns}\|\De_{n}(u_0\pa_xu_0)\|_{L^2}-C2^{-n}-Ct^22^{2n}.
\end{align}
Next, we shall estimate the norm $2^{ns}\|\De_{n}(u_0\pa_xu_0)\|_{L^2}$. By Lemma \ref{le3}, we have
\bal\label{ql-10}
2^{ns}\|\De_{n}(u_0\pa_xu_0)\|_{L^2}&=2^{ns}\|u_0\pa_x\De_{n}u_0+[\De_{n},u_0]\pa_xu_0\|_{L^2} \nonumber
\\&\geq \nonumber
2^{{n}s}\|u_0\pa_x\De_{n}u_0\|_{L^2}-2^{{n}s}\|[\De_{n},u_0]\pa_xu_0\|_{L^2}\\&\geq
2^{{n}s}\|u_0\pa_x\De_{n}u_0\|_{L^2}-C||u_0||^2_{B^s_{2,\infty}}.
\end{align}
Since
\bbal
\De_{n}u_0(x)&=2^{-ns} \phi(x)\cos \bi(\frac{17}{12}2^{n}x\bi),
\end{align*}
we find that
\bbal
\pa_x\De_{n}u_0&=2^{-ns} \phi'(x)\cos \bi(\frac{17}{12}2^{n}x\bi)-\frac{17}{12}2^{n}2^{-ns} \phi(x)\sin \bi(\frac{17}{12}2^{n}x\bi).
\end{align*}
Thus,
\bbal
u_0\pa_x\De_{n}u_0&=2^{-ns} u_0(x)\phi'(x)\cos \bi(\frac{17}{12}2^{n}x\bi)-\frac{17}{12}2^{n}2^{-ns} u_0(x)\phi(x)\sin \bi(\frac{17}{12}2^{n}x\bi).
\end{align*}
Observe that $u_0(x)$ is a real-valued and continuous function defined on $\R$. Consequently, there exists some $\delta>0$ such that
\bal\label{ql-20}
&|u_0(x)|\geq \fr{1}{2}|u_0(0)|=\fr{1}{2}\phi(0)\sum\limits^{\infty}_{n=0}2^{-ns}=\frac{2^{s}\phi(0)}{2(2^s-1)}\quad\text{ for any }  x\in B_{\delta}(0).\end{align}
Therefore, it follows from \eqref{ql-20} that
\bbal
\|u_0\pa_x\De_{n}u_0\|_{L^2}
&\geq c2^{n}2^{-ns} \bi\|\phi(x)\sin \bi(\frac{17}{12}2^{n}x\bi)\bi\|_{L^2(B_{\delta}(0))}-C2^{-ns}\bi\| \phi'(x)\phi(x)\cos \bi(\frac{17}{12}2^{n}x\bi)\bi\|_{L^2}\\
&\geq (c2^{n}-C)2^{-ns}.
\end{align*}
By selecting $n$ sufficiently large so that $C<c2^{n-1}$, we derive the following lower bound:
\bal\label{ql-30}
\|u_0\pa_x\De_{n}u_0\|_{L^2}\geq c2^{-n(s-1)}.
\end{align}
Plugging \eqref{ql-10} and \eqref{ql-30} into \eqref{ql-1}, we have for $t\in[0,T]$
\bbal
\f\|\mathbf{S}^{\alpha_n}_{t}(u_0)-\mathbf{S}^0_{t}(u_0)\g\|_{B^s_{2,\infty}}\geq ct2^{n}-Ct-C2^{-n}-Ct^22^{2n}.
\end{align*}
Then, taking large $n>N$ such that $c2^{n}\geq 2C$, we obtain the following inequality for  $t\in[0,T]$
\bal\label{ql-100}
\f\|\mathbf{S}^{\alpha_n}_{t}(u_0)-\mathbf{S}^0_{t}(u_0)\g\|_{B^s_{2,\infty}}\geq ct2^{n}-C2^{-n}-Ct^22^{2n}.
\end{align}
By selecting $t_n=\ep2^{-n}$ with a sufficiently small $\ep$, and choosing $n>N$ large enough such that $C2^{-n}\leq \ep^2$, we obtain from \eqref{ql-100}
\bbal
\f\|\mathbf{S}^{\alpha_n}_{t_n}(u_0)-\mathbf{S}^0_{t_n}(u_0)\g\|_{B^s_{2,\infty}}\geq c\ep-C\ep^2\geq c_0\ep.
\end{align*}
This completes the proof of Theorem \ref{th1}.

\section*{Acknowledgments}
Jianzhong Lu is supported by  National Nature Science Foundation of China (No. 12401165), Hunan Provincial Natural Science Foundation (No. 2024JJ6412), Excellent Youth Project of Hunan Provincial Department of Education (No. 24B0776) and Youth Program of Xiangnan University (No. 2023XJ01). Wei Deng is supported by Science and Technology Research Projects of Jiangxi Provincial Department of Education (No. GJJ2406103).

\section*{Conflict of interest}
The authors declare that they have no conflict of interest.

\section*{Data Availability} Data sharing is not applicable to this article as no new data were created or analyzed in this study.

\addcontentsline{toc}{section}{References}


\begin{thebibliography}{99}
\linespread{0}\addtolength{\itemsep}{-1.0ex}

\bibitem{B.C.D} H. Bahouri, J. Y. Chemin, R. Danchin, Fourier Analysis and Nonlinear Partial Differential Equations, Grundlehren der Mathematischen Wissenschaften, Springer, Heidelberg, 2011.
\bibitem{bc1} A. Bressan, A. Constantin, Global conservative solutions of the Camassa-Holm equation, Arch. Ration. Mech. Anal., 183 (2007), 215-239.
\bibitem{bc2} A. Bressan, A. Constantin, {Global dissipative solutions of the Camassa-Holm equation}, {Anal. Appl.}, {5} (2007), 1-27.
\bibitem{Camassa} R. Camassa, D. Holm, An integrable shallow water equation with peaked solitons, Phys. Rev. Lett., 71 (1993), 1661-1664.
\bibitem{C.L.L.W.L} Y. Cheng, J. Lu, M. Li, X. Wu, J. Li, Zero-filter limit issue for the Camassa-Holm equation in Besov spaces, Monatsh. Math., 205 (2024), 119-135.
\bibitem{Constantin} A. Constantin, Existence of permanent and breaking waves for a shallow water equation: a geometric approach, Ann. Inst. Fourier, 50 (2000), 321-362.
\bibitem{Constantin-E} A. Constantin, The Hamiltonian structure of the Camassa-Holm equation, Exposition. Math., 15 (1997), 53-85.
\bibitem{Constantin-P} A. Constantin, On the scattering problem for the Camassa-Holm equation, R. Soc. Lond. Proc. Ser. A Math. Phys. Eng. Sci.,  457 (2001), 953-970.
\bibitem{Constantin-I} A. Constantin, The trajectories of particles in Stokes waves, Invent. Math., 166 (2006), 523-535.
\bibitem{Escher2} A. Constantin, J. Escher, Well-posedness, global existence, and blowup phenomena for a periodic quasi-linear hyperbolic equation, Comm. Pure Appl. Math., 51 (1998), 475-504.
\bibitem{Escher3} A. Constantin, J. Escher, Wave breaking for nonlinear nonlocal shallow water equations, Acta Math., 181 (1998), 229-243.
\bibitem{ce4} A. Constantin, J. Escher, Global existence and blow-up for a shallow water equation, {Ann. Scuola Norm. Sup. Pisa Cl. Sci. (4)},  {26} (1998), 303-328.
\bibitem{wp13} R. Danchin, A few remarks on the Camassa–Holm equation, Differential Integral Equations, 14 (8) (2001), 953-988.
\bibitem{wp14} R. Danchin, A note on well-posedness for Camassa–Holm equation, J. Differential Equations, 192 (2) (2003), 429-444.
\bibitem{Fokas} A. Fokas, B. Fuchssteiner, Symplectic structures, their B\"{a}cklund transformation and hereditary symmetries, Phys. D, 4 (1981/82), 47-66.
\bibitem{GL} G. Gui, Y. Liu, Global well-posedness and blow-up of solutions for the Camassa-Holm equations with fractional dissipation, Mathematische Zeitschrift, 281 (2015), 993-1020.
\bibitem{glmy} Z. Guo, X. Liu, L. Molinet, Z. Yin, {Ill-posedness of the Camassa-Holm and related equations in the critical space}, {J. Differential Equations}, {266} (2019), 1698-1707.
\bibitem{wp19} Y. Li, P.J. Olver, Well-posedness and blow-up solutions for an integrable nonlinearly dispersive model wave equation, J. Differential Equations, 162 (1) (2000), 27-63.
\bibitem{wp18} J. Li, Z. Yin, Remarks on the well-posedness of Camassa–Holm type equations in Besov spaces, J. Differential Equations, 261 (11) (2016), 6125-6143.
\bibitem{Li2020} J. Li, Y. Yu, W. Zhu, Non-uniform dependence on initial data for the Camassa-Holm equation in Besov spaces, J. Differential Equations, 269 (2020), 8686-8700.
\bibitem{lyz} J. Li, Y. Yu, W. Zhu, Zero-filter limit for the Camassa–Holm equation in Sobolev spaces, {Appl. Math. Lett.}, {145} (2023), 108727.
\bibitem{lyz1} J. Li, Y. Yu, W. Zhu, Non-uniform convergence of solution for the Camassa-Holm equation in the zero-filter limit. Monatsh. Math. 205 (2024), 177-185.
\bibitem{miao2009} C. Miao, G. Wu, Global well-posedness of the critical Burgers equation in critical Besov spaces, J. Differ. Equ., 247 (2009), 1673-1693.
\bibitem{Molinet} L. Molinet, D. Pilodb, S. Vento, On well-posedness for some dispersive perturbations of Burgers'equation, Ann. I. H. Poincar\'{e}-AN, 35 (2018), 1719-1756.
\bibitem{wp23} G. Rodríguez-Blanco, On the Cauchy problem for the Camassa–Holm equation, Nonlinear Anal. 46 (3) (2001), 309-327.
\bibitem{t} J. F. Toland, Stokes waves, {Topol. Methods Nonlinear Anal.}, {\bf7} (1996), 1-48.
\bibitem{xin} Z. Xin, P. Zhang, On the weak solutions to a shallow water equation, Commun. Pure Appl. Math. 53 (2000), 1411-1433.





\end{thebibliography}
\end{document}